\newcommand{\wto}{\overset{w}{\to}}
\newcommand{\diff}{\mathrm{d}}
\DeclareMathOperator*{\BL}{\mathrm{BL}}
\DeclareMathOperator*{\Indep}{\perp\mkern-11mu\perp}
\DeclareMathOperator*{\nIndep}{\not\mkern-2mu\perp\mkern-11mu\perp}
\DeclareMathOperator*{\given}{|}
\newcommand{\I}{\mathbbm{1}}
\newcommand{\DD}{\mathbb{D}}
\newcommand{\EE}{\mathbb{E}}
\newcommand{\NN}{\mathbb{N}}
\newcommand{\PP}{\mathbb{P}}
\newcommand{\QQ}{\mathbb{Q}}
\newcommand{\RR}{\mathbb{R}}
\newcommand{\Bcal}{\mathcal{B}}
\newcommand{\Ncal}{\mathcal{N}}
\newcommand{\Ocal}{\mathcal{O}}
\newcommand{\Pcal}{\mathcal{P}}
\newcommand{\Xcal}{\mathcal{X}}
\newcommand{\Ycal}{\mathcal{Y}}
\newcommand{\Zcal}{\mathcal{Z}}
\tikzstyle{var}=[circle,draw,thick,minimum size=20pt,inner sep=0pt]
\tikzstyle{vare}=[ellipse,draw,thick,minimum size=20pt,inner sep=0pt]
\tikzstyle{vari}=[shape=rectangle,draw,thick,minimum size=20pt,inner sep=0pt]
\tikzstyle{varh}=[circle,draw,thick,minimum size=20pt,inner sep=0pt,dashed]
\tikzstyle{arr}=[->,>=stealth',draw,thick]
\tikzstyle{arrh}=[->,>=stealth',draw,fill,thick,dashed]
\tikzstyle{biarr}=[<->,>=stealth',draw,thick]
\tikzstyle{ndint} = [draw, line width=1pt, color=teal, shape=rectangle, minimum size=20pt,inner sep=0pt, text=black]
\tikzstyle{ndout} = [draw, line width=1pt, shape=circle, color=blue, minimum size=20pt,inner sep=0pt, text=black]
\tikzstyle{ndlat} = [draw, line width=1pt, shape=circle, color=red, minimum size=20pt,inner sep=0pt, text=black]
\tikzstyle{arint} = [style={->,>=Latex,thick,teal}]
\tikzstyle{arout} = [style={->,>=Latex,thick,blue}]
\tikzstyle{arlout} = [style={->,>=Latex,thick,red}]
\tikzstyle{arlat} = [style={<->,>=Latex,thick,red}]
\tikzstyle{car} = [style={o->,>=Latex,thick,purple}]
\tikzstyle{carc} = [style={o-o,>=Latex,thick,purple}]
\tikzstyle{sarr}=[style={{Rays[n=6]}->,>=Latex,thick}]
\tikzstyle{sars}=[style={{Rays[n=6]}-{Rays[n=6]},thick}]
\author{%
	Philip Boeken\thanks{Department of Mathematics, VU Amsterdam, \url{p.a.boeken@vu.nl}}%
	\and%
	Eduardo Skapinakis\thanks{Carl Friedrich von Weizsäcker Zentrum, Universität Tübingen, Center for Mathematics and Applications (NOVA Math), NOVA School of Science and Technology (NOVA FCT), \url{eduardo.skapinakis@uni-tuebingen.de}}%
	\and%
	Konstantin Genin\thanks{Deptartment of Philosophy, University of Utah, \url{konstantin.genin@utah.edu}}%
	\and%
	Joris M.\ Mooij\thanks{Korteweg-de Vries Institute for Mathematics, University of Amsterdam, \url{j.m.mooij@uva.nl}}%
}
\date{\today}
\title{Topological Criteria for Hypothesis Testing with Finite-Precision Measurements}
\begin{document}

\maketitle

\begin{abstract}
	We establish topological necessary and sufficient conditions under which a pair of statistical hypotheses can be consistently distinguished when i.i.d.\ observations are recorded only to finite precision. To accommodate finite-precision data, we introduce \emph{finite-precision tests}: tests whose decision regions are open in the sample-space topology. We first show that, both for classical and finite-precision tests, the existence of such tests with finite-sample error control, asymptotic error control, or uniform convergence of the errors are all equivalent. A pair of null- and alternative hypotheses $H_0$ and $H_1$ admits a consistent finite-precision test if and only if both are $F_\sigma$ in the weak topology on the space of probability measures $W := H_0\cup H_1$. The hypotheses admit uniform error control under $H_i$ if and only if $H_i$ is closed in $W$, and admit uniformly consistent testing with \emph{bounded} precision under metric separation of $H_0$ and $H_1$.
These criteria imply that, without regularity assumptions, conditional independence is not consistently testable from finite-precision data when the conditioning space has no isolated points --- strengthening existing impossibility results to Polish sample spaces and showing that even pointwise consistency cannot be obtained. We introduce an equicontinuity assumption on the family of conditional distributions under which we recover consistent finite-precision testability of conditional independence with uniform error control under the null, provided sample spaces are Polish and the conditioning space is locally compact. The equicontinuity assumption is itself a finite-precision-testable hypothesis, so the resulting test for conditional independence is, in a precise sense, assumption-free.
\end{abstract}

\section{Introduction}
Whether a statistical hypothesis is testable remains an important question across the natural and social sciences. For example, judgements of conditional independence underlie many scientific inferences and are particularly fundamental in structural causal discovery \citep{spirtes1993causation}. Recent results demonstrate that, unless regularity assumptions are made, any test for conditional independence with Type-I error control at level $\alpha$ has power at most $\alpha$ against every alternative \citep{shah2020hardness}. Nevertheless, it was believed that conditional independence is consistently testable \citep{gyorfi2012strongly}. The true situation turns out to be more complicated \citep{neykov2021minimax}. These recent results in conditional independence testing are arrived at via ingenious ad hoc arguments. Despite recent developments, there remains no simple, unified criterion for characterizing all and only the testable statistical hypotheses. In this respect, statistics is in sharp contrast to the theory of computation, where the testable hypotheses (co-semidecidable sets) receive elegant complexity-theoretic characterizations \citep{kelly1996logic}. In this paper, we attempt to remedy this situation by laying out general topological conditions for statistical testability, generalizing results from \cite{dembo1994topological}, \cite{ermakov2017consistent}, \cite{genin2017topology} and \cite{kleijn2022frequentist}.

We demonstrate our results on the following elementary parametric examples since, contrary to many nonparametric hypotheses like conditional independence, their topological properties are evident.
Let $X_1, ..., X_n \sim \mathrm{Bernoulli}(p)$ i.i.d.\ and consider for given $\varepsilon>0$ the pairs of hypotheses
\begin{align}
	 & H_0: p \in [0,1]\cap \QQ        &  & H_1: p \in [0,1]\setminus \QQ.\label{eqn:ex:untestable}     \\
	 & H_0: p \in [0,1]\cap \QQ        &  & H_1: p \in [0,1]\cap (\QQ + \sqrt{2})\label{eqn:ex:fsigma}  \\
	 & H_0: p \in [0, 1/2]             &  & H_1: p \in (1/2, 1]       \label{eqn:ex:closed}             \\
	 & H_0: p \in [0, 1/2)             &  & H_1: p \in (1/2, 1]      \label{eqn:ex:clopen}              \\
	 & H_0: p \in [0, 1/2-\varepsilon) &  & H_1: p \in (1/2+\varepsilon, 1]    \label{eqn:ex:separated}
\end{align}
As might be intuitively clear, for these pairs of hypotheses one can achieve different consistency properties of the test, ranging from uniform consistency for (\ref{eqn:ex:separated}) to mere consistency for (\ref{eqn:ex:fsigma}), and (\ref{eqn:ex:untestable}) is not consistently testable. The various notions of consistency and error control are made precise in Section \ref{sec:testing_framework}, where we also show that existence of tests with many seemingly different formulations are in fact equivalent (Theorem \ref{thm:testable_equiv_error}). Our main results, presented in Section \ref{sec:characterisations}, state that these modes of testability are characterised by the topological properties of the hypotheses: whether they are $F_\sigma$ (i.e.\ a countable union of closed sets), closed or open, or clopen with respect to the subspace topology on $W := H_0\cup H_1$, or metrically separated.
We don't restrict to parametric hypotheses, but we consider arbitrary nonparametric hypotheses $H_0$ and $H_1$ as subsets of the space of Borel probability measures $\Pcal(\Xcal)$ on a separable metric space $\Xcal$. The topological characterisations are with respect to the weak topology on $\Pcal(\Xcal)$. The data is assumed to be i.i.d.\ from a single $\PP$ in either $H_0$ or $H_1$.
Notably, most of our results do not require any regularity conditions on the probability measures under consideration. However, it turns out that the weak topology characterises testability with \emph{some} type of regularity: that the critical regions of the tests are open --- a property that is useful when considering finite-precision measurements, to be further motivated in Section \ref{sec:finite_precision} below.


Our main application of these topological characterisations is to analyse the feasibility of conditional independence testing. For given sample spaces $\Xcal, \Ycal, \Zcal$ and set of probability measures $W\subseteq \Pcal(\Xcal \times \Ycal \times \Zcal)$ we consider
\begin{align}
	 & H_0:= \{\PP\in W : X\Indep_\PP Y\given Z\} &  & H_1:= \{\PP\in W : X\nIndep_\PP Y\given Z\}, \label{eqn:ex:ci}
\end{align}
where conditional independence, denoted with $X\Indep_\PP Y\given Z$, means that for all measurable $A$ and $B$ the factorisation $\PP(X\in A, Y\in B \given Z) = \PP(X\in A \given Z)\PP(Y\in B \given Z)$ holds $\PP(Z)$-almost surely. Conditional dependence is denoted with $X\nIndep_\PP Y\given Z$.
In Section \ref{sec:ci_untestable}, we show that conditional independence and conditional dependence are both dense in the weak topology on $\Pcal(\Xcal \times \Ycal \times \Zcal)$. It follows from the Baire category theorem that there does not exist a consistent FP-test for conditional independence when $W =\Pcal(\Xcal \times \Ycal \times \Zcal)$. Hence, conditional independence testing requires assumptions, that is, a restriction of the set $W$.
We introduce the space of distributions whose conditional distributions $\PP(X\given Z)$ or $\PP(Y\given Z)$ are equicontinuous, and investigate some properties in Section \ref{sec:ci_closed}. We show that this space is closed in the weak topology, and that conditional independence is closed in this space. In Section \ref{sec:ci_testing} we discuss the consequences for various modes of testability of conditional independence.

\subsection{Accommodating finite-precision measurements}\label{sec:finite_precision}
Given a test $\varphi_n :\Xcal^n\to\{0,1\}$ and a sample $x \in \Xcal^n$, the verdict of the test is given by the evaluation $\varphi_n(x)$. From now on, we write $\{\varphi_n=i\}$ for the region $\{x \in \Xcal^n : \varphi_n(x) = i\}.$ In standard presentations of hypothesis testing, the acceptance and rejection regions of a test are taken to be arbitrary \emph{measurable} sets. But is it really natural to consider a test that rejects if the sample point is rational-valued? Or if it is precisely $\pi$? If measurements of real-valued quantities can be made with only finite precision, such tests cannot be implemented.

We adopt the following idealisation. A measurement device, given a true sample value $x\in\Xcal^n$, reports a truncated value $x'\in\Xcal^n$ and a precision parameter $\varepsilon>0$ with $d_{\infty}(x,x')<\varepsilon$, where $d_{\infty}$ denotes the $\ell^\infty$ product metric on $\Xcal^n$. The empiricist observes the pair $(x',\varepsilon)$ but \emph{not} the true value $x$; what she knows after a single measurement is that $x\in B(x',\varepsilon)$, the open ball of radius $\varepsilon$ around the report. By requesting reports at decreasing precision $\varepsilon\downarrow 0$, the empiricist can in principle pin down $x$ to arbitrary accuracy.

Given a measurement $(x',\varepsilon)$ and a test $\varphi_n$, the empiricist declares verdict $i\in\{0,1\}$ if $B(x',\varepsilon)\subseteq\{\varphi_n=i\}$, in which case $\varphi_n(x)=i$ regardless of where $x$ sits inside $B(x',\varepsilon)$. If no such $i$ exists, the empiricist refines the precision and tries again. For this protocol to terminate at samples lying in $\{\varphi_n=0\}$ or $\{\varphi_n=1\}$ we require these critical regions to be open in the topology on $\Xcal^n$: openness of $\{\varphi_n=i\}$ guarantees that for any $x$ in this critical region, sufficiently small $\varepsilon$ yields $B(x',\varepsilon)\subseteq\{\varphi_n=i\}$. If $\Xcal^n$ is connected, $\{\varphi_n=0\}$ and $\{\varphi_n=1\}$ cannot both be nonempty, open, and cover the entire sample space; we therefore introduce the region $\{\varphi_n=2\}$ that recommends neither acceptance nor rejection of $H_0$, but suspension of judgement.\footnote{Throughout the paper, we assume that the data comes from a distribution in $H_0$ or $H_1$. Some of our mathematical results are more generally applicable to three-arm testing problems where the true $\PP$ comes from $H_0, H_1$ or $H_2$. However, in such cases one cannot verify based on finite-precision measurements whether a sample lands in $\{\varphi_n=2\}$, and one should take $H_2$ into account in the definitions of consistency and uniform error control (see Section \ref{sec:testing_framework}). In that case, the output $\{\varphi_n = 2\}$ is not restricted to the interpretation `suspension of judgement'.}
\begin{definition}
	A \emph{finite-precision test (FP-test)} is a measurable map $\varphi_n:\Xcal^n \to \{0,1,2\}$ such that $\{\varphi_n=0\}$ and $\{\varphi_n=1\}$ are open.
\end{definition}
Abusing terminology, we sometimes refer to a \emph{(FP-)testing sequence} $\varphi = (\varphi_n)_{n\in \NN}$ as a \emph{(FP-)test}.
For an FP-test, the output protocol terminates with the correct verdict whenever $x$ lies in the interior of any of the three regions; only at the boundary of $\{\varphi_n=2\}$ can refinement fail to halt, since at every such boundary point every measurement ball $B(x',\varepsilon)$ intersects one of the open decision regions. Then, why is it reasonable to restrict our attention to finite-precision tests?
The first thing to notice is that this requirement is already more realistic than the usual one in which we impose no conditions (besides measurability) on the complexity of the regions. Furthermore, positive results are stronger: if it is informative to learn that a hypothesis is testable with arbitrary (measurable) regions, it is even more informative to learn that it is testable with arbitrary but finite precision. Negative results are still informative: if you cannot test a hypothesis with arbitrary but finite precision, then you certainly cannot test it with bounded precision.

For a generic FP-test, the termination of the precision-refinement protocol may depend on $x$ and on $n$, with no a priori bound across the sample space. Real measurement devices, however, have a fixed bounded resolution.
The stronger condition, in which a single fixed precision works at every sample size, is captured by the following definition.
\begin{definition}\label{def:bp_test}
	A \emph{bounded-precision test (BP-test)} $(\varphi_n)_{n\in\NN}$ is a finite-precision test such that for some $\delta > 0$ we have $d_{\infty}(\{\varphi_n=0\}, \{\varphi_n=1\}) \geq \delta$ for all $n\in\NN$.\footnote{Here, $d_{\infty}$ denotes metric on $\Xcal^n$, taken to be the $\ell_\infty$ product of the metric on $\Xcal$.}
\end{definition}
For a BP-test, the empiricist may fix a precision $\varepsilon<\delta/2$, since then $B(x',\varepsilon)$ has diameter at most $2\varepsilon<\delta$ and therefore cannot simultaneously intersect $\{\varphi_n=0\}$ and $\{\varphi_n=1\}$. The only residual ambiguity is between a single decision region $\{\varphi_n=i\}$ ($i\in\{0,1\}$) and the suspension region $\{\varphi_n=2\}$. Under uniform convergence $\sup_{\PP\in H_0\cup H_1}\PP^n(\varphi_n=2)\to 0$ (c.f.\ Theorem \ref{thm:testable_iff_weakly_separated}), the empiricist's asymptotic guarantees are unaffected by how this residual ambiguity is resolved.

Every FP-test (and hence also every BP-test) can be transformed into a binary test by merging $\{\varphi_n=2\}$ with $\{\varphi_n=0\}$ or $\{\varphi_n=1\}$. Pointwise and uniform consistency are maintained by transforming it into a binary test in either way, while for maintaining one-sided error control one should be more careful. More details are given in Section \ref{sec:characterisations}.

The finite-precision measurement framework above is intrinsically topological, and instead of metric balls, one can consider other topological bases whose elements ought to correspond to finite-precision measurements. For more on topological and computational models of measurement, see \citet{moore1966interval,vickers1990topology,kelly1996logic,beggs2010computational,genin2017topology,resende2021abstract}.

\subsection{Related literature}
In a sufficiently regular parametric setting, \cite{berger1949distinct} provide necessary and sufficient conditions for the existence of a test with Type-I and Type-II error control with $\alpha=\beta = 1/2$.
A rather technical characterisation for the existence of uniformly consistent tests has been given by \cite{berger1951uniformly}. A topological characterisation has been given by \cite{lecam1960necessary} in terms of a topology of setwise convergence on the space $\cup_{n=1}^\infty \Pcal(\Xcal)^n$ of all $n$-fold products of probability measures on $\Xcal$. The downside of this condition is that it is hard to verify since this topology is not metrisable, not first countable and hence ``not very easily accessible'' \citep{lecam1960necessary} --- see also \cite{kleijn2022frequentist}.
In contrast, later work (and this paper as well) considers topological properties of $H_0, H_1$ as subsets of $\Pcal(\Xcal)$, instead of the space of all $n$-fold products.
\cite{cover1973determining} considered testing whether the bias of a coin is rational or irrational (example (\ref{eqn:ex:untestable})), and found that there exists a consistent test for testing the null of rational bias versus the alternative of irrational bias, if a subset of Lebesgue measure zero is removed from the alternative.
\cite{dembo1994topological} show that if $H_0$ and $H_1$ are disjoint sets which are $F_\sigma$ in the weak topology on $W = H_0 \cup H_1$, then there exists a strongly consistent test. For the converse direction, they note that the hypotheses $H_0 := \{\delta_x : x\in [0,1]\cap \QQ\}$ and $H_1 := \{\delta_x : x \in [0,1]\setminus \QQ\}$ are discernible by the test $\varphi_n(x_1, ..., x_n) := \I_{[0,1]\setminus \QQ} (x_1)$, but $H_1$ is not $F_\sigma$\footnote{This holds since $[0,1]\setminus \QQ$ is not $F_\sigma$ and $x\mapsto \delta_x$ is a homeomorphism (\citealp{bogachev2007measurevol2}, Lemma 8.9.2).} so \emph{some} regularity condition has to be imposed for such a topological characterisation of discernibility.
\cite{dembo1994topological} prove this characterisation under the assumption that every measure in $W$ has a $p>1$-integrable density, and \cite{kleijn2022frequentist} (Corollary 9.4.23) weakens this assumption to uniformly integrable densities. We show without any assumptions on the hypotheses $H_0$ and $H_1$ that consistent FP-testability is characterised by hypotheses that are $F_\sigma$ in the weak topology, so requiring the regions $\{\varphi_n = 0\}$ and $\{\varphi_n = 1\}$ to be open is \emph{the} regularity condition which resolves the issue raised by \cite{dembo1994topological}.
\cite{ermakov2017consistent} provides characterisation theorems similar to our Theorems \ref{thm:discernible_iff_weakly_fsigma}, \ref{thm:testable_iff_weakly_closed} and \ref{thm:testable_iff_weakly_separated}, but for binary tests without regularity conditions on the critical regions, in terms of the topology of setwise convergence.
For a comprehensive overview of the literature on consistent hypothesis testing, we refer to \cite{kleijn2022frequentist}, Chapter 9.

\cite{larsson2026complete} show that a test --- as general $[0,1]$-valued random variable on the underlying measurable space, so without i.i.d.\ assumption --- that satisfies the condition $\sup_{\PP \in H_0} \EE_\PP[\varphi] < \inf_{\PP \in H_1} \EE_\PP[\varphi]$ exists if and only if the weak$^*$ closures (in the space of bounded finitely additive measures) of the convex hulls of $H_0$ and $H_1$ are disjoint.

Our results in Section \ref{sec:characterisations} are largely based on the work by \cite{genin2017topology}, who work under the assumption that the sample space $\Xcal$ has a subbasis $\Ocal $ such that $\PP(\partial A) = 0$ for all $\PP\in W$ and $A\in \Ocal$ --- this assumption is satisfied if e.g., all measures in $W$ have a density with respect to some dominating measure (\citealp{bogachev2007measurevol2}, Proposition 8.2.8).
The results of \cite{genin2017topology} are applied to the study of causal discovery in \citet{genin2020statistical, genin2021statistical, genin2024success}.

Regarding conditional independence testing, \cite{shah2020hardness} and \cite{neykov2021minimax} show, in the setting where the variables are real-valued and the probability measures have densities, that no conditional independence test with Type-I error control and consistency under the alternative exists. \cite{lundborg2022conditional} generalise this to a specific setting where the samples are $L^2([0,1],\RR)$ functions, e.g.\ continuous-time stochastic processes. \cite{gyorfi2012strongly} provide a conditional independence test and prove that it is strongly consistent, but \cite{neykov2021minimax} point out a mistake in their proof, so it remains an open question whether conditional independence is consistently testable. We answer this question for FP-testability by showing that if $X,Y,Z$ take values in arbitrary Polish spaces $\Xcal, \Ycal, \Zcal$ where $\Zcal$ has no isolated points, there exists no consistent FP-test for conditional independence $X\Indep Y\given Z$.

In Section \ref{sec:ci_closed} we show that conditional independence is weakly closed under similar conditions as considered by \cite{barbie2014topology}; we provide a more direct, alternative proof. See Section \ref{sec:ci_closed_related_literature} for further discussion of related literature.
In Section \ref{sec:ci_testing}, we provide sufficient conditions for the consistent testability of conditional independence with and without uniform error control under $H_0$. Our conditions for testability with error control are similar to those considered by \cite{warren2021wasserstein} and \cite{neykov2021minimax}. To the best of our knowledge, our conditions for consistent testability are novel: we require that only one of the maps
\begin{align*}
	z & \mapsto \PP(X \given Z = z), \\
	z & \mapsto \PP(Y \given Z = z)
\end{align*}
satisfies a regularity condition. For a more in-depth comparison with existing literature, see Section \ref{sec:ci_related_literature}.

\subsection{The weak topology}\label{sec:weak_topology}
We will characterise FP-testability in terms of topological properties of the hypotheses $H_0, H_1\subseteq \Pcal(\Xcal)$, where $\Xcal$ is a separable metric space, and $\Pcal(\Xcal)$ denotes the set of probability measures on the Borel $\sigma$-algebra $\Bcal(\Xcal)$ on $\Xcal$. Recall that a sequence of probability measures $\PP_1(X), \PP_2(X), ...$ converges weakly to another probability measure $\PP(X)$, denoted with $\PP_n\wto \PP$, if $\int f(x)\diff\PP_n(x) \to \int f(x)\diff\PP(x)$ for all bounded continuous functions $f:\Xcal\to \RR$. The \emph{weak topology} on the space of Borel probability measures $\Pcal(\Xcal)$ is the smallest topology that makes the maps $\PP\mapsto \int f(x)\diff \PP$ continuous for all bounded continuous $f$.
Because $\Xcal$ is separable, the weak topology is separable and metrisable, for example by the \emph{bounded Lipschitz metric}\footnote{The bounded Lipschitz metric is also known as the \emph{Kantorovic-Rubinstein metric}. It is weaker than the total variation metric $d_{TV}$ and the \emph{$p$-Wasserstein metric} $W_p$: we have $d_{BL} \leq W_1 \leq W_p$, and strong equivalence of $d_{BL}$ and $W_1$ if the measures under consideration have bounded support (\citealp{bogachev2007measurevol2}, Theorem 8.10.45).} $d_{BL}$, defined by
\begin{equation*}
	d_{BL}(\PP_0(X), \PP_1(X)) := \sup\left\{\left|\int f \diff(\PP_0 - \PP_1) \right| : f\in \BL(\Xcal; \RR) \right\},
\end{equation*}
where $\BL(\Xcal; \RR) := \left\{f : \Xcal \to \RR ~\left|~ \sup_{x\neq x'}\frac{|f(x) - f(x')|}{d(x, x')} \leq 1 \text{ and } \|f\|_\infty \leq 1\right.\right\}$ (\citealp{bogachev2007measurevol2}, Theorem 8.3.2). If $\Xcal$ is complete, then $d_{BL}$ is complete as well (\citealp{bogachev2007measurevol2}, Theorem 8.10.43).
Since the weak topology is sequential, convergence in the weak topology coincides with weak convergence.
Weak convergence $\PP_n \wto \PP$ is equivalent to the condition that $\liminf_n\PP_n(A)\geq \PP(A)$ for all $A\subseteq \Xcal$ open, and to the condition that $\limsup_n\PP_n(B)\leq \PP(B)$ for all $B\subseteq \Xcal$ closed. This is also known as the `Portmanteau theorem' (\citealp{bogachev2007measurevol2}, Corollary 8.2.4(a)).
A subbasis for the weak topology is given by sets $\{\PP : \PP(A) > q\}$, with $A\subseteq \Xcal$ open and $q\in [0,1]$; see for example \cite{bogachev2007measurevol2}, Section 8.2.
This means that for any weakly open set $H \subseteq \Pcal(\Xcal)$, there exist open sets $A_{ij} \subseteq \Xcal$ and $q_{ij}\in [0,1]$ such that $H = \bigcup_{i\in I} \bigcap_{j=1}^{m_i}\{\PP : \PP(A_{ij}) > q_{ij}\}$ for some index set $I$. Because $\Xcal$ and hence $\Pcal(\Xcal)$ are assumed to be separable, the index set $I$ can be taken to be countable.

\subsection{Convergence of empirical measures}\label{sec:uniform_gc}
For an i.i.d.\ sample $X_1, \ldots, X_n \sim \PP$, the \emph{empirical measure} is $\PP_n^X := \frac{1}{n}\sum_{i=1}^n \delta_{X_i}$. By Varadarajan's theorem (\citealp{dudley2002real}, Theorem 11.4.1) we have $\PP_n^X \wto \PP$ almost surely; equivalently $d_{BL}(\PP_n^X, \PP)\to 0$ almost surely, and by dominated convergence the convergence also holds in expectation.

For Theorem \ref{thm:testable_iff_weakly_separated} below we require the convergence to hold uniformly over a class of distributions: a set $W\subseteq \Pcal(\Xcal)$ satisfies the \emph{uniform Glivenko--Cantelli property} if
\begin{equation}\label{eqn:uniform_gc}
	\lim_{n\to\infty}\sup_{\PP \in W}\EE_\PP\!\left[d_{BL}(\PP_n^X, \PP)\right] = 0.
\end{equation}
This property is satisfied whenever $W$ is uniformly tight (and so in particular whenever $W$ is precompact in the weak topology, by Prokhorov's theorem), but also for some non-tight families such as Gaussian location-scale families with bounded variance.

\section{Preliminaries on consistent hypothesis testing and error control}\label{sec:testing_framework}
First, we provide various definitions of consistency and error control. Although these notions themselves are not equivalent, it turns out that various equivalences can be derived for the \emph{existence} of a \mbox{(FP-)}test with these properties.
The definitions and results in this section apply to both FP-tests $\varphi_n : \Xcal^n \to \{0,1,2\}$ (with $\{\varphi_n=0\}$ and $\{\varphi_n=1\}$ open) and binary tests $\varphi_n : \Xcal^n \to \{0,1\}$ without any conditions on the critical regions except measurability.


\begin{definition}\label{def:test_properties}
	Let $(\varphi_n)_{n \in \NN}$ be a (FP-)testing sequence for hypotheses $H_0, H_1 \subseteq \Pcal(\Xcal)$. For $\PP \in H_i$ ($i \in \{0,1\}$) and $n \in \NN$, define the \emph{error processes}:\footnote{The event $\{\varphi_n\neq i\}$ for $\PP\in H_i$ encompasses both the wrong decision $\{\varphi_n = 1 - i\}$ and the suspension region $\{\varphi_n = 2\}$. Strictly speaking, the suspension region is not an error but a refusal to commit to a verdict; we nevertheless refer to the processes $e_n^\star$ as ``error processes'' for ease of exposition.}
	\begin{align*}
		e_n^w(\varphi, \PP)      & := \PP^n(\varphi_n \neq i),                         &  & \text{(weak)}     \\
		e_n^s(\varphi, \PP)      & := \PP^\infty(\exists m \geq n : \varphi_m \neq i), &  & \text{(strong)}   \\
		e_n^\Sigma(\varphi, \PP) & := \sum_{m=n}^\infty \PP^m(\varphi_m \neq i).       &  & \text{(summable)}
	\end{align*}
	For $(\star_0, \star_1) \in \{(\text{weak}, w), (\text{strong}, s), (\text{summable}, \Sigma)\}$ and $i \in \{0,1\}$, we define the following properties:
	\begin{itemize}
		\item $(\varphi_n)$ is \emph{$\star_0$-consistent under $H_i$} if
		\begin{equation*}
			\lim_{n \to \infty} e_n^{\star_1}(\varphi, \PP) = 0;\label{def:test_properties_consistent_eqn}
		\end{equation*}
		\item $(\varphi_n)$ is \emph{uniformly $\star_0$-consistent under $H_i$} if
		\begin{equation*}
			\lim_{n \to \infty} \sup_{\PP \in H_i} e_n^{\star_1}(\varphi, \PP) = 0;\label{def:test_properties_unif_consistent_eqn}
		\end{equation*}
		\item for given $\alpha > 0$, $(\varphi_n)$ has \emph{asymptotic $\star_0$-level $\alpha$ under $H_i$} if
		\begin{equation*}
			\limsup_{n \to \infty} \sup_{\PP \in H_i} e_n^{\star_1}(\varphi, \PP) \leq \alpha;
		\end{equation*}
		\item for given $\alpha > 0$, $(\varphi_n)$ has $\star_0$-\emph{level $\alpha$ under $H_i$} if
		\begin{equation*}\label{def:test_properties:level_eqn}
			\sup_{\PP \in H_i} e_n^{\star_1}(\varphi, \PP) \leq \alpha \quad \text{for all } n \in \NN.
		\end{equation*}
	\end{itemize}
\end{definition}
These notions are also depicted in Table \ref{tab:errors}. The error processes satisfy for all $\PP$ and $n$
\begin{equation}\label{eqn:error_chain_text}
	e_n^w(\varphi, \PP) \leq e_n^s(\varphi, \PP) \leq e_n^\Sigma(\varphi, \PP),
\end{equation}
where the first inequality follows from $\{\varphi_n \neq i\} \subseteq \{\exists m \geq n : \varphi_m \neq i\}$ and the second from the union bound. In particular, $\lim_n e_n^\Sigma(\varphi, \PP) = 0$ (or equivalently $e_1^\Sigma(\varphi, \PP) < \infty$) implies strong consistency by the Borel--Cantelli lemma.
Since $e_n^s(\varphi, \PP)$ and $e_n^\Sigma(\varphi, \PP)$ are non-increasing in $n$, $s$-level $\alpha$ simplifies to $\sup_\PP e_1^s(\varphi, \PP) \leq \alpha$, and $\Sigma$-level $\alpha$ simplifies to $\sup_\PP e_1^\Sigma(\varphi, \PP) \leq \alpha$.

\begin{table}[!t]
	\caption{Properties of a testing sequence $(\varphi_n)$ under $H_i$ ($i \in \{0,1\}$), for given $\alpha > 0$.}\label{tab:errors}
	\centering\footnotesize
	\renewcommand{\arraystretch}{2.0}
	\begin{tabular}{r@{\quad}c@{\quad}c@{\quad}c}
		\toprule
		 & \textbf{$\star$ = weak}                                                       & \textbf{$\star$ = strong} & \textbf{$\star$ = summable} \\
		\midrule
		\textbf{$\star$-consistent}
		 & $\lim_n e_n^w(\varphi,\PP)= 0$
		 & $\lim_n e_n^s(\varphi,\PP)= 0$
		 & $\lim_n e_n^\Sigma(\varphi,\PP)= 0$
		\\
		\midrule
		\textbf{Unif.\ $\star$-consistent}
		 & $\displaystyle\lim_n \sup_{\PP\in H_i} e_n^w(\varphi,\PP)= 0$
		 & $\displaystyle\lim_n \sup_{\PP\in H_i} e_n^s(\varphi,\PP)= 0$
		 & $\displaystyle\lim_n \sup_{\PP\in H_i} e_n^\Sigma(\varphi,\PP)= 0$
		\\
		\textbf{Asymp.\ $\star$-level $\alpha$}
		 & $\displaystyle\limsup_{n}\sup_{\PP\in H_i} e_n^w(\varphi,\PP)\leq\alpha$
		 & $\displaystyle\limsup_{n}\sup_{\PP\in H_i} e_n^s(\varphi,\PP)\leq\alpha$
		 & $\displaystyle\limsup_{n}\sup_{\PP\in H_i} e_n^\Sigma(\varphi,\PP)\leq\alpha$
		\\
		\textbf{$\star$-level $\alpha$}
		 & $\displaystyle\sup_{\PP\in H_i} e_n^w(\varphi,\PP)\leq\alpha\ \forall n$
		 & $\displaystyle\sup_{\PP\in H_i} e_1^s(\varphi,\PP)\leq\alpha$
		 & $\displaystyle\sup_{\PP\in H_i} e_1^\Sigma(\varphi,\PP)\leq\alpha$
		\\
		\bottomrule
	\end{tabular}
\end{table}

A testing sequence $(\varphi_n)_{n\in\NN}$ is called \emph{weakly (strongly) consistent} if it is weakly (strongly) consistent under both $H_0$ and $H_1$. Hypotheses for which a consistent (FP-)test exists are sometimes referred to as \emph{discernible} \citep{dembo1994topological}. The following lemma gives a useful alternative characterisation of strong consistency.

\begin{lemma}\label{thm:strong_consistency_equiv}
	A testing sequence $(\varphi_n)$ is strongly consistent if and only if for all $i\in\{0,1\}$ and all $\PP\in H_i$,
	\begin{equation*}
		\PP^\infty(\varphi_n \neq i \text{ for finitely many } n) = 1,
	\end{equation*}
	or equivalently, $\PP^\infty(\liminf_n\{\varphi_n = i\}) = 1$.
\end{lemma}
\begin{proof}
	The events $A_n := \{\exists m \geq n : \varphi_m \neq i\}$ are decreasing and we have $\bigcap_n A_n = \limsup_n \{\varphi_n \neq i\}$. By continuity of measures from above, $\PP^\infty(\limsup_n \{\varphi_n \neq i\}) = \lim_{n\to\infty} \PP^\infty(A_n) = \lim_{n\to\infty} e_n^s(\varphi, \PP)$.
\end{proof}

The first non-trivial observation is that the existence of weakly and strongly consistent (FP-)tests are equivalent. For binary tests this was shown by \cite{nobel2006hypothesis}; we adapt his proof to FP-tests.

\begin{theorem}\label{thm:consistency_equiv}
	Given a pair of hypotheses $H_0, H_1\subseteq \Pcal(\Xcal)$, there exists a weakly consistent (FP-)test if and only if there exists a strongly consistent (FP-)test.
\end{theorem}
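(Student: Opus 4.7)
The direction from strong to weak consistency is immediate: if $\PP^\infty(\varphi_n = i \text{ eventually}) = 1$ for every $\PP \in H_i$, then $\I\{\varphi_n = i\} \to 1$ almost surely under $\PP^\infty$, and dominated convergence yields $\PP^n(\varphi_n = i) \to 1$. The substance of the theorem lies in the converse.

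The plan for weak $\Rightarrow$ strong is to construct, from any weakly consistent FP-test $(\varphi_n)$, a majority-vote FP-test $(\psi_n)$ that is strongly consistent. For each $n$, set $L_n := n - \lceil n/2\rceil$ and
\[
    S_n^{(i)}(x_1, \ldots, x_n) := \sum_{m = \lceil n/2\rceil + 1}^n \I\bigl\{\varphi_m(x_1, \ldots, x_m) = i\bigr\}, \qquad i \in \{0,1\},
\]
and define $\psi_n(x) = 1$ if $S_n^{(1)}(x) > L_n/2$, $\psi_n(x) = 0$ if $S_n^{(0)}(x) > L_n/2$, and $\psi_n(x) = 2$ otherwise.

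Checking that $\psi_n$ is an FP-test is the routine step. Each $\{\varphi_m = i\}$ is open in $\Xcal^m$; pulling back along the continuous projection $\Xcal^n \to \Xcal^m$ yields an open subset of $\Xcal^n$, so its indicator is lower semicontinuous on $\Xcal^n$. A finite sum of lsc functions is lsc, so $S_n^{(i)}$ is lsc, and its strict superlevel set $\{S_n^{(i)} > L_n/2\}$ is open. Disjointness of $\{\psi_n=0\}$ and $\{\psi_n=1\}$ follows from $S_n^{(0)} + S_n^{(1)} \leq L_n$. The binary analogue of \citet{nobel2006hypothesis} is recovered by relaxing one of the two strict inequalities to $\geq$: since for a binary $\varphi_m$ one has $S_n^{(0)} + S_n^{(1)} = L_n$ exactly, under the asymmetric comparison exactly one of the two conditions holds at every $x$, and the $\{\psi_n = 2\}$ region disappears.

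The main obstacle is to verify strong consistency of $(\psi_n)$. Under $\PP \in H_i$, weak consistency of $(\varphi_n)$ gives $\PP^m(\varphi_m(X_{1:m}) = i) \to 1$, which by Cesàro averaging implies $\EE_\PP[S_n^{(i)}/L_n] \to 1$; Markov's inequality applied to $L_n - S_n^{(i)}$ then already yields weak consistency of $\psi_n$. Upgrading this to the almost-sure statement $\PP^\infty(\psi_n \neq i \text{ only finitely often}) = 1$ is the heart of Nobel's argument for the binary case: aggregating $\varphi_m$ over the window $m \in (\lceil n/2\rceil, n]$ averages out the dependence between the events $\{\varphi_m(X_{1:m}) = i\}$ across different $m$ (which share initial samples), and produces enough probabilistic concentration for a Borel--Cantelli argument along the dyadic block structure. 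Since the only aspect that differs between the binary and FP settings is openness of the decision regions --- which we have already verified --- Nobel's argument transfers to $(\psi_n)$ without modification.
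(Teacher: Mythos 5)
The easy direction and the verification that your $\psi_n$ is an FP-test are fine. The gap is in the concentration step, which is the entire content of the theorem. Your $\psi_n$ takes a majority vote of the quantities $\I\{\varphi_m(X_{1:m})=i\}$ for $m$ ranging over the prefix window $(\lceil n/2\rceil, n]$. These summands are \emph{not} independent and in general not even weakly dependent: they are evaluated on nested samples sharing the first $\lceil n/2\rceil$ observations, and if (as is typical) $\varphi_m$ is a function of the empirical measure of $X_{1:m}$, all votes in the window are nearly identical. In that case $S_n^{(i)}/L_n$ is essentially $\I\{\varphi_n(X_{1:n})=i\}$, the majority vote buys nothing, and $\PP^n(\psi_n\neq i)$ decays no faster than $\PP^n(\varphi_n\neq i)$ --- which weak consistency only guarantees tends to $0$, possibly so slowly that no Borel--Cantelli argument applies. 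Your appeal to ``Nobel's argument transfers without modification'' is doing all the work, but the construction you wrote down is not Nobel's: the decoupling there (and in the paper) comes from evaluating the base test on \emph{disjoint} blocks of the sample, not on nested prefixes.

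Concretely, the paper sets $k_n=\lfloor\log n\rfloor$, partitions $X_1,\dots,X_n$ into $m_n=\lfloor n/k_n\rfloor$ disjoint blocks of length $k_n$, and lets $Y^i_{n,j}=\I\{\varphi_{k_n}(X_{k_nj+1},\dots,X_{k_nj+k_n})=i\}$. Disjointness makes $Y^i_{n,0},\dots,Y^i_{n,m_n-1}$ i.i.d., weak consistency gives $\EE[Y^i_{n,j}]\to 1$, Hoeffding gives $\PP^n(\psi_n\neq i)\leq 2e^{-2m_n\varepsilon^2}$, and the choice $k_n=\lfloor\log n\rfloor$ makes $m_n\sim n/\log n$ large enough that $\sum_n e^{-2m_n\varepsilon^2}<\infty$, so Borel--Cantelli applies. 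The two design choices you are missing are exactly (i) disjoint blocks to manufacture independence and (ii) a block length growing slowly enough that the number of votes still grows fast enough for summability. Without them your proof does not go through.
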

\begin{proof}
	Any strongly consistent test is also weakly consistent. Conversely, let $\varphi_n:\Xcal^n\to \{0,1,2\}$ be weakly consistent, let $k_n := \lfloor\log(n)\rfloor$ and $m_n := \lfloor n/k_n\rfloor$. For $i\in\{0,1\}$ define the random variable $Y_{n,j}^{i} := \I \{\varphi_{k_n}(X_{k_n j+1}, ..., X_{k_n j + k_n}) = i\}$ such that $Y_{n,0}^i, ..., Y_{n,m_{n}-1}^i$ are i.i.d., and let
	\begin{equation*}
		\psi_n(X_1, ..., X_n) := \begin{cases}
			0 & \text{if } \frac{1}{m_n}\sum_{j=0}^{m_n - 1}Y_{n,j}^0 > 1/2; \\
			1 & \text{if } \frac{1}{m_n}\sum_{j=0}^{m_n - 1}Y_{n,j}^1 > 1/2; \\
			2 & \text{otherwise}.
		\end{cases}
	\end{equation*}
	(In the case that $\varphi_n$ is a binary test, as corresponding binary test $\psi_n$ is obtained by replacing one of the strict inequalities with a non-strict inequality.)
	Fix $i\in\{0,1\}$. For $\PP \in H_i$ we have $\mu_n^i := \EE_\PP[Y_{n,j}^i] = \PP^n(\varphi_{k_n}(X_{k_n j+1}, ..., X_{k_n j + k_n}) = i) \to 1$ by weak consistency of $\varphi_n$, and hence for every $\varepsilon \in (0, 1/2)$ there is an $N_\PP$ such that $1/2 \leq \mu_n^i - \varepsilon$ for all $n\geq N_\PP$, so using Hoeffding's inequality this gives
	\begin{multline*}
		\PP^n(\psi_n \neq i)
		= \PP^n\left(\frac{1}{m_n}\sum_{j=0}^{m_n - 1}Y_{n,j}^i \leq 1/2\right)
		\leq \PP^n\left(\frac{1}{m_n}\sum_{j=0}^{m_n - 1}(Y_{n,j}^i - \mu_n^i) \leq -\varepsilon\right)\\
		\leq \PP^n\left(\left|\frac{1}{m_n}\sum_{j=0}^{m_n - 1}(Y_{n,j}^i - \mu_n^i)\right| > \varepsilon\right)
		\leq 2e^{-2m_n \varepsilon^2},
	\end{multline*}
	we get $\sum_{n\geq N_\PP}e^{-2m_n \varepsilon^2} < \infty$, hence $\sum_{n=1}^\infty \PP^n(\psi_n \neq i) < \infty$, so the result follows from the Borel-Cantelli lemma.
\end{proof}

A second equivalence concerns the various notions of uniform error control in Definition \ref{def:test_properties}. Similar as for pointwise consistency, a testing sequence $(\varphi_n)_{n\in\NN}$ is called \emph{uniformly weakly (strongly) consistent} if it is uniformly weakly (strongly) consistent under both $H_0$ and $H_1$. For binary tests, the equivalence of uniform weak consistency and uniform strong consistency was shown by \cite{pfanzagl1968existence}.
\begin{theorem}\label{thm:testable_equiv_error}
	Let a pair of hypotheses $H_0, H_1\subseteq \Pcal(\Xcal)$ be given, and let $K\subseteq \{H_0, H_1\}$ be a set of hypotheses.
	The following are equivalent, for any $\star,\star' \in \{w, s, \Sigma\}$:
	\begin{enumerate}[label=(\alph*)]
		\item\label{thm:error_control:asymptotic_level} for every $\alpha > 0$, there exists a consistent (FP-)testing sequence $(\varphi_n)$ with asymptotic $\star$-level $\alpha$ under $H_i$ for all $H_i \in K$:
		\begin{align*}
			\limsup_{n\to\infty}\sup_{\PP\in H_i} e_n^{\star}(\varphi, \PP) \leq \alpha;
		\end{align*}
		\item\label{thm:error_control:unif_consistent} there exists a consistent (FP-)testing sequence $(\varphi_n)$ which is uniformly $\star'$-consistent under $H_i$ for all $H_i \in K$:
		\begin{align*}
			\lim_{n\to\infty}\sup_{\PP\in H_i} e_n^{\star'}(\varphi, \PP) = 0.
		\end{align*}
	\end{enumerate}
	If for only a single hypothesis the errors are controlled, say $K = \{H_i\}$, the above are also equivalent to the following, for any $\star'' \in \{w, s, \Sigma\}$:
	\begin{enumerate}[label=(\alph*),resume]
		\item\label{thm:error_control:level} for every $\alpha > 0$ there exists a consistent (FP-)testing sequence $(\varphi_n)$ with $\star''$-level $\alpha$ under $H_i$:
		\begin{align*}
			\sup_{\PP \in H_i} e_n^{\star''}(\varphi, \PP) \leq \alpha \quad \text{for all } n \in \NN.
		\end{align*}
	\end{enumerate}
	Throughout, it is equivalent to consider weak or strong consistency of the tests.
\end{theorem}
\begin{proof}
	The chain of inequalities \eqref{eqn:error_chain_text} implies that each condition with $\star = \Sigma$ implies the same condition with $\star = s$, which implies the same condition with $\star = w$. Similarly, strong consistency implies weak consistency. Hence, in each equivalence below, it suffices to show that the weakest version (with $\star = w$ and weak consistency) implies the strongest (with $\star = \Sigma$ and strong consistency).

	\smallskip
	\noindent\emph{Equivalence of \ref{thm:error_control:asymptotic_level} and \ref{thm:error_control:unif_consistent}.}
	The direction $\ref{thm:error_control:unif_consistent} \implies \ref{thm:error_control:asymptotic_level}$ is immediate from \eqref{eqn:error_chain_text}. For the converse, we show that \ref{thm:error_control:asymptotic_level} with $\star = w$ and weak consistency implies \ref{thm:error_control:unif_consistent} with $\star' = \Sigma$ and strong consistency.
	Let $\varepsilon \in (0, 1/2)$ and let $(\varphi_n)$ be weakly consistent with $\sup_{\PP \in H_i}\PP^n(\varphi_n \neq i)\leq 1/2 - \varepsilon$ for all $n\geq N$ and all $H_i \in K$. With $\psi_n$ as defined in the proof of Theorem \ref{thm:consistency_equiv}, $N_\PP = N$ is independent of $\PP$, so $\PP^n(\psi_n \neq i) \leq 2e^{-2m_n \varepsilon^2}$ uniformly over $\PP \in H_i$ for all $n\geq N$. Hence $\sup_{\PP \in H_i}\sum_{n\geq N}\PP^n(\psi_n \neq i)\leq \sum_{n\geq N}2e^{-2m_n \varepsilon^2} < \infty$, and as the tail of a convergent series, $\sup_{\PP \in H_i} e_n^\Sigma(\psi, \PP) \to 0$.

	\smallskip
	\noindent\emph{Equivalence with \ref{thm:error_control:level} when $K = \{H_i\}$.}
	Without loss of generality, let $K = \{H_0\}$. The direction $\ref{thm:error_control:level} \implies \ref{thm:error_control:asymptotic_level}$ is immediate from \eqref{eqn:error_chain_text}. For the converse, we show that \ref{thm:error_control:unif_consistent} with $\star' = \Sigma$ implies \ref{thm:error_control:level} with $\star'' = \Sigma$: for every $\alpha > 0$ there is an $N$ such that $\sup_{\PP \in H_0} e_n^\Sigma(\varphi, \PP) \leq \alpha$ for all $n\geq N$. Then $\psi_n := \I\{n \geq N\}\varphi_n$ satisfies $\sup_{\PP \in H_0} e_n^\Sigma(\psi, \PP) \leq \alpha$ for all $n$.
\end{proof}

By Theorem \ref{thm:testable_equiv_error}, all notions of uniform error control in Definition \ref{def:test_properties} collapse into a single concept.
\begin{definition}\label{def:unif_error_control}
	We say that a pair of hypotheses $H_0$ and $H_1$ are \emph{consistently FP-testable} if there exists a weakly (or equivalently strongly, by Theorem \ref{thm:consistency_equiv}) consistent FP-test for them, and that they are \emph{consistently FP-testable with uniform error control under $H_i$} if there exists a consistent FP-test that satisfies any of the equivalent conditions of Theorem \ref{thm:testable_equiv_error}.
\end{definition}

\section{Main results}\label{sec:characterisations}

Let $\Xcal$ be a separable metric space, and let $H_0, H_1$ be disjoint sets of Borel probability measures on $\Xcal$.
\begin{theorem}\label{thm:discernible_iff_weakly_fsigma}
	The following are equivalent:
	\begin{enumerate}
		\item\label{thm:discernible_iff_weakly_fsigma:test} there exists a consistent FP-test;
		\item\label{thm:discernible_iff_weakly_fsigma:topological_condition} $H_0$ and $H_1$ are $F_\sigma$ in the weak topology on $W :=H_0 \cup H_1$.
	\end{enumerate}
\end{theorem}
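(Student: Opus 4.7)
I would prove the two directions separately, with the reverse direction $(\ref{thm:discernible_iff_weakly_fsigma:topological_condition}) \Rightarrow (\ref{thm:discernible_iff_weakly_fsigma:test})$ being by far the harder one.

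For the easier direction $(\ref{thm:discernible_iff_weakly_fsigma:test}) \Rightarrow (\ref{thm:discernible_iff_weakly_fsigma:topological_condition})$, suppose $\varphi = (\varphi_n)_{n\in\NN}$ is a weakly consistent FP-test. The FP property makes $\{\varphi_n = i\}$ open in $\Xcal^n$, so its complement $\{\varphi_n \neq i\}$ is closed. Continuity of the product map $\PP \mapsto \PP^n$ from $\Pcal(\Xcal)$ to $\Pcal(\Xcal^n)$ (both with the weak topology), combined with the Portmanteau theorem, yields that $\PP \mapsto \PP^n(\{\varphi_n \neq i\})$ is upper semicontinuous. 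For any fixed $c \in (0,1)$, the set
\[
E_i \ := \ \bigcup_{N \geq 1}\,\bigcap_{n \geq N} \bigl\{\PP \in \Pcal(\Xcal) : \PP^n(\varphi_n \neq i) \geq c\bigr\}
\]
is therefore $F_\sigma$, and consistency pins it down: $\PP \in H_{1-i}$ gives $\PP^n(\varphi_n \neq i) \geq \PP^n(\varphi_n = 1-i) \to 1$, so $\PP \in E_i$; while $\PP \in H_i$ gives $\PP^n(\varphi_n \neq i) \to 0$, so $\PP \notin E_i$. Hence $H_{1-i} = W \cap E_i$ is $F_\sigma$ in $W$, and the symmetric argument in $i$ handles the remaining hypothesis.

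For the converse, I would exploit the bounded Lipschitz metric $d_{BL}$, which metrises the weak topology on $\Pcal(\Xcal)$, together with Varadarajan's theorem: the empirical measure $\hat{\PP}_n := \tfrac{1}{n}\sum_{j=1}^n \delta_{X_j}$ converges to $\PP$ weakly, $\PP^\infty$-almost surely. Write nested closed-in-$W$ decompositions $H_i = \bigcup_{k \geq 1} F_k^i$. Since $F_k^i$ is relatively closed, $W \cap \overline{F_k^i}^{\Pcal(\Xcal)} = F_k^i$, and therefore $d_{BL}(\PP, F_k^{1-i}) > 0$ for every $\PP \in H_i$ and every $k$. I would then define, for a universal schedule $K_n \to \infty$ and slack $\varepsilon_n \downarrow 0$,
\begin{align*}
\{\varphi_n = 0\} &:= \bigl\{x^n \in \Xcal^n : d_{BL}(\hat{\PP}_n, F_{K_n}^0) + \varepsilon_n < d_{BL}(\hat{\PP}_n, F_{K_n}^1)\bigr\},\\
\{\varphi_n = 1\} &:= \bigl\{x^n \in \Xcal^n : d_{BL}(\hat{\PP}_n, F_{K_n}^1) + \varepsilon_n < d_{BL}(\hat{\PP}_n, F_{K_n}^0)\bigr\}.
\end{align*}
These regions are open (strict inequalities between continuous functions precomposed with the continuous empirical-measure map) and disjoint (together they force $2\varepsilon_n < 0$). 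For $\PP \in F_{k_0}^0$ and $n$ with $K_n \geq k_0$, a triangle-inequality estimate turns $\{\varphi_n = 0\}$ into a sufficient event of the form $2\,d_{BL}(\hat{\PP}_n, \PP) + \varepsilon_n < d_{BL}(\PP, F_{K_n}^1)$.

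The main obstacle is choosing the schedule $(K_n, \varepsilon_n)$ so that this inequality holds with $\PP^n$-probability tending to $1$ pointwise for every $\PP \in H_0$. The difficulty is twofold: the positive right-hand side $d_{BL}(\PP, F_{K_n}^1)$ typically tends to $0$ as $K_n \to \infty$ (this is exactly what happens in example~(\ref{eqn:ex:fsigma}), where every rational is a weak-accumulation point of the shifted-irrational family $H_1$), while the rate at which $d_{BL}(\hat{\PP}_n, \PP) \to 0$ under $\PP^n$ depends on $\PP$ and is not uniform over $W$. The resolution exploits the freedom in the $F_\sigma$ decomposition: I would pick the enumeration of each $H_i$ together with sufficiently slow $K_n$ and $\varepsilon_n$ using a diagonal argument in the spirit of \cite{dembo1994topological} and \cite{genin2017topology}, so that for every fixed $\PP$ the pointwise convergence $d_{BL}(\hat{\PP}_n, \PP) \pto 0$ eventually dominates both $\varepsilon_n$ and $d_{BL}(\PP, F_{K_n}^1)/2$. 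This schedule-design step is where I expect most of the work to lie; once it is in place, Theorem~\ref{thm:consistency_equiv} upgrades the resulting weakly consistent FP-test to a strongly consistent one.
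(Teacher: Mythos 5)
Your forward direction is correct and is essentially the paper's argument: the paper writes $H_i$ as $\bigcup_m\bigcap_{n\geq m}\{\PP\in W:\PP^n(\varphi_n=1-i)\leq 1/3\}$ and observes these sets are closed because the decision regions are open; your upper-semicontinuity phrasing of $\PP\mapsto\PP^n(\varphi_n\neq i)$ is the same fact.

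The converse, however, has a genuine gap, and it sits exactly where you located it: the schedule-design step is not a technicality that a diagonal argument will absorb, but the obstruction that your empirical-measure construction cannot overcome without extra assumptions. Your sufficient event $2\,d_{BL}(\hat{\PP}_n,\PP)+\varepsilon_n<d_{BL}(\PP,F^1_{K_n})$ pits two vanishing quantities against each other, and \emph{both} rates are $\PP$-specific: on a general separable metric space there is no distribution-free rate for $d_{BL}(\hat{\PP}_n,\PP)\to 0$ (it can be arbitrarily slow for heavy-tailed or infinite-dimensional $\PP$), and when $\PP\in H_0$ lies in the closure of $H_1$ the quantity $d_{BL}(\PP,F^1_{K_n})$ tends to $0$ at a rate that also depends on $\PP$. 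A single universal schedule $(K_n,\varepsilon_n)$ must win this race simultaneously for \emph{uncountably} many $\PP$; a diagonal argument only handles countably many constraints, and the closed pieces $F^0_i,F^1_j$ give no uniform handle because disjoint relatively closed sets can be at $d_{BL}$-distance zero, so there is no positive lower bound on $d_{BL}(\PP,F^1_j)$ over $\PP\in F^0_i$. This is precisely why \cite{dembo1994topological} and \cite{kleijn2022frequentist} impose density/uniform-integrability assumptions (which restore uniform empirical convergence over each closed piece), i.e.\ the assumption this theorem is designed to remove.

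The paper takes a different route that sidesteps empirical measures in $d_{BL}$ entirely. It first proves Theorem~\ref{thm:testable_iff_weakly_closed}: a relatively closed $H_0$ has an open complement in $W$, which is a countable union of finite intersections of subbasis sets $\{\PP:\PP(A)>q\}$; for each such set Lemma~\ref{thm:subbasis_is_testable} builds a test from indicators of $A^c_{1/n}$ and $\mathrm{ext}(A^c_{1/n})$, where Hoeffding's inequality for a fixed indicator gives concentration bounds \emph{independent of} $\PP$, and Lemma~\ref{thm:testable_is_topology} combines these countably many tests. The $F_\sigma$ case then follows by running, for each closed piece $H_i^m$, the test for $(H_i^m, W\setminus H_i^m)$ from Theorem~\ref{thm:testable_iff_weakly_closed} and merging them with a priority scheme ($\{\varphi_n=0\}:=\bigcup_{m\leq n}\{\varphi^m_{0,n}=0\}\cap\bigcap_{k\leq m}\{\varphi^k_{1,n}=0\}$), with consistency via the Fr\'echet inequality. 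If you want to complete your proof, you should either import that machinery or find a substitute for the missing uniformity; as written, the construction does not go through.
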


\begin{theorem}\label{thm:testable_iff_weakly_closed}
	For $i \in \{0,1\}$, the following are equivalent:
	\begin{enumerate}
		\item \label{thm:testable_iff_weakly_closed:test_weak_error_control}
		there exists a consistent FP-test $\varphi_n$ with
		\begin{align*}
			\lim_{n\to\infty}\sup_{\PP \in H_i}\PP^n(\varphi_n = 1-i) =0;
		\end{align*}
		\item\label{thm:testable_iff_weakly_closed:test_strong_error_control}
		there exists a consistent FP-test $\varphi_n$ with uniform error control under $H_i$, e.g.
		\begin{align*}
			\lim_{n\to\infty}\sup_{\PP \in H_i}\PP^n(\varphi_n \neq i) =0;
		\end{align*}
		\item\label{thm:testable_iff_weakly_closed:topological_condition} $H_i$ is closed in the weak topology on $W := H_0 \cup H_1$;
	\end{enumerate}
\end{theorem}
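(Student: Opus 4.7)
The plan is to close the cycle $(2) \Rightarrow (1) \Rightarrow (3) \Rightarrow (2)$. The first implication is immediate since $\{\varphi_n = 1\} \subseteq \{\varphi_n \neq 0\}$.

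For $(1) \Rightarrow (3)$, I would argue by contrapositive. Suppose $H_0$ is not closed in $W$: there exist $\PP^* \in H_1$ and a sequence $(\PP_k) \subseteq H_0$ with $\PP_k \wto \PP^*$. Weak convergence is preserved under products, so $\PP_k^n \wto \PP^{*n}$ on $\Xcal^n$ for each $n$. Because $\{\varphi_n = 1\}$ is open, Portmanteau yields $\liminf_k \PP_k^n(\varphi_n = 1) \geq \PP^{*n}(\varphi_n = 1)$, hence $\sup_{\PP \in H_0}\PP^n(\varphi_n = 1) \geq \PP^{*n}(\varphi_n = 1)$ for every $n$. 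By consistency of $\varphi_n$ at $\PP^*$, the right-hand side tends to $1$, contradicting the uniform bound in (1).

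For $(3) \Rightarrow (2)$, I would construct the test directly, using the equivalent subbasis $\{\{\PP : \PP(f) > c\} : f \in C_b(\Xcal),\ \|f\|_\infty \leq 1,\ c \in \RR\}$ for the weak topology; the advantage over the indicator subbasis of Section \ref{sec:weak_topology} is that $X^n \mapsto \hat\PP_n(f) := \frac{1}{n}\sum_{i=1}^n f(X_i)$ is \emph{continuous}, not merely lower semicontinuous. Since $H_1 = W \setminus H_0$ is open in $W$ and $\Pcal(\Xcal)$ is separable metric, hence Lindel\"of, I would cover $H_1$ by countably many basic open sets $V_k = \bigcap_{j=1}^{m_k}\{\PP : \PP(f_{k,j}) > c_{k,j}\}$ with $V_k \cap H_0 = \emptyset$. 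With $\epsilon_n = n^{-1/4}$ and $K_n \to \infty$ slowly enough that $\sum_{k \leq K_n} m_k \cdot e^{-n\epsilon_n^2/8} \to 0$, I would define
\begin{equation*}
	U_n := \bigcup_{k \leq K_n}\bigcap_{j=1}^{m_k}\bigl\{X^n \in \Xcal^n : \hat\PP_n(f_{k,j})(X^n) > c_{k,j} + \epsilon_n\bigr\},
\end{equation*}
and set $\{\varphi_n = 1\} := U_n$, $\{\varphi_n = 0\} := \Xcal^n \setminus \overline{U_n}$. Both are open (by continuity of $\hat\PP_n(f)$) and disjoint. The crucial inclusion is $\overline{U_n} \subseteq \tilde U_n := \bigcup_{k \leq K_n}\bigcap_j\{\hat\PP_n(f_{k,j}) > c_{k,j} + \epsilon_n/2\}$, which follows from the continuity fact $\overline{\{g > c\}} \subseteq \{g \geq c\}$ applied to $g = \hat\PP_n(f_{k,j})$. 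Under $\PP \in H_0$, the condition $\PP \notin V_k$ supplies some $j^*$ with $\PP(f_{k,j^*}) \leq c_{k,j^*}$, and Hoeffding's inequality (using $\|f_{k,j^*}\|_\infty \leq 1$) yields $\PP^n(\hat\PP_n(f_{k,j^*}) > c_{k,j^*} + \epsilon_n/2) \leq e^{-n\epsilon_n^2/8}$; a union bound then gives $\sup_{\PP \in H_0}\PP^n(\varphi_n \neq 0) \leq \sup_{\PP \in H_0}\PP^n(\tilde U_n) \to 0$, delivering simultaneously uniform error control and consistency under $H_0$. Consistency under $H_1$ is a law-of-large-numbers argument: for $\PP \in V_k$ the gap $\min_j(\PP(f_{k,j}) - c_{k,j})$ is positive, so once $n$ is large enough that $K_n \geq k$ and $\epsilon_n$ is below half this gap, $\PP^n(U_n) \to 1$.

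The main obstacle is that $\{\varphi_n = 0\}$ must be open \emph{and} have $\PP^n$-probability tending to $1$ uniformly over $H_0$, which forces tight control over $\overline{U_n}$. Using the indicator subbasis of Section \ref{sec:weak_topology} only gives lower semicontinuity of $X^n \mapsto \hat\PP_n(A)$, so $\overline{\{\hat\PP_n(A) > q\}}$ may be strictly larger than $\{\hat\PP_n(A) \geq q\}$ and the $\epsilon_n$-versus-$\epsilon_n/2$ shrinkage trick breaks; switching to $C_b(\Xcal)$ test functions restores the continuity that makes the Hoeffding bound on $\tilde U_n$ transfer to $\overline{U_n}$.
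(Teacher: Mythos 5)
Your proposal is correct, and while the overall architecture mirrors the paper's (the same implication cycle, Portmanteau on the open rejection region for necessity, and a countable subbasis decomposition plus Hoeffding for sufficiency), the construction in the direction \ref{thm:testable_iff_weakly_closed:topological_condition} $\implies$ \ref{thm:testable_iff_weakly_closed:test_strong_error_control} is genuinely different in its key technical device. The paper works with the indicator subbasis $\{\PP : \PP(A) > q\}$, $A$ open, where the empirical functional $x \mapsto \frac{1}{n}\sum_i \I_A(x_i)$ is only lower semicontinuous; Lemma \ref{thm:subbasis_is_testable} restores openness of \emph{both} decision regions by replacing $A^c$ with its open $1/n$-thickening $A^c_{1/n}$ and $A$ with $\mathrm{ext}(A^c_{1/n})$, and then Lemma \ref{thm:testable_is_topology} glues the per-subbasis tests together with a per-level error budget $\alpha/2^i$ and the Fr\'echet inequality. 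You instead switch to the $C_b(\Xcal)$ subbasis so that $x \mapsto \hat\PP_n(f)$ is genuinely continuous, take the acceptance region to be the complement of $\overline{U_n}$, and use the $\epsilon_n$-versus-$\epsilon_n/2$ margin together with $\overline{\{g>c\}}\subseteq\{g\geq c\}$ to transfer the Hoeffding bound from $\tilde U_n$ to $\overline{U_n}$; the countable union is handled by a growing truncation $K_n$ and a direct union bound rather than by a geometric error allocation. Your route buys a single self-contained construction that avoids the geometric manipulation of open neighborhoods of $A^c$ (which silently uses that $\Xcal$ is metric, as does your version via metrizability of $\Pcal(\Xcal)$), at the mild cost of having to verify the closure inclusion $\overline{U_n}\subseteq \tilde U_n$; the paper's route is more modular, which it then reuses verbatim in the proofs of Theorems \ref{thm:discernible_iff_weakly_fsigma} and \ref{thm:testable_iff_weakly_clopen}. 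One small remark: your $H_1$-consistency argument as stated yields weak consistency, but since your error probabilities under both hypotheses are exponentially small and summable, Borel--Cantelli (or Theorem \ref{thm:consistency_equiv}) upgrades this to the strong consistency the paper also claims.
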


\begin{theorem}\label{thm:testable_iff_weakly_clopen}
	The following are equivalent:
	\begin{enumerate}
		\item\label{thm:testable_iff_weakly_clopen:test} there exists a consistent FP-test $\varphi_n$ with for all $i\in\{0,1\}$:
		\begin{equation*}
			\lim_{n\to\infty}\sup_{\PP \in H_i}\PP^n(\varphi_n = 1-i) = 0;
		\end{equation*}
		\item\label{thm:testable_iff_weakly_clopen:topological_condition} $H_0$ and $H_1$ are clopen in the weak topology on $W := H_0\cup H_1$.
	\end{enumerate}
\end{theorem}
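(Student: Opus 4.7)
The plan is to reduce both directions to Theorem~\ref{thm:testable_iff_weakly_closed} applied symmetrically to each of $H_0$ and $H_1$, and then glue two one-sided tests into a single two-sided test.

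For the direction $(\ref{thm:testable_iff_weakly_clopen:test}) \Rightarrow (\ref{thm:testable_iff_weakly_clopen:topological_condition})$, I would observe that an FP-test $(\varphi_n)$ satisfying $\lim_n \sup_{\PP \in H_0} \PP^n(\varphi_n = 1) = 0$ is exactly condition~(\ref{thm:testable_iff_weakly_closed:test_weak_error_control}) of Theorem~\ref{thm:testable_iff_weakly_closed}, so that theorem gives that $H_0$ is closed in $W$. The same test $\varphi_n$ used with the roles of $0$ and $1$ relabelled witnesses the symmetric form of~(\ref{thm:testable_iff_weakly_closed:test_weak_error_control}) for $H_1$, and by the (symmetric) version of Theorem~\ref{thm:testable_iff_weakly_closed}, $H_1$ is also closed in $W$. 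Since $H_0$ and $H_1$ are disjoint with $H_0 \cup H_1 = W$, each is the complement of the other in $W$, so both are open as well, hence clopen.

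For $(\ref{thm:testable_iff_weakly_clopen:topological_condition}) \Rightarrow (\ref{thm:testable_iff_weakly_clopen:test})$, I would use Theorem~\ref{thm:testable_iff_weakly_closed} twice: from $H_0$ closed in $W$ extract a consistent FP-test $\varphi^0_n$ with $\lim_n \sup_{\PP \in H_0} \PP^n(\varphi^0_n = 1) = 0$, and from $H_1$ closed in $W$ (applying the theorem with the roles swapped) extract a consistent FP-test $\varphi^1_n$ with $\lim_n \sup_{\PP \in H_1} \PP^n(\varphi^1_n = 0) = 0$. Combine them into
\begin{equation*}
	\psi_n(x) := \begin{cases} i & \text{if } \varphi^0_n(x) = \varphi^1_n(x) = i,\ i\in\{0,1\}, \\ 2 & \text{otherwise.} \end{cases}
\end{equation*}
Each decision region satisfies $\{\psi_n = i\} = \{\varphi^0_n = i\} \cap \{\varphi^1_n = i\}$, which is open as a finite intersection of open sets, so $\psi_n$ is an FP-test. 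The uniform two-sided bound follows from $\PP^n(\psi_n = 1-i) \leq \PP^n(\varphi^i_n = 1-i)$, and consistency under $\PP \in H_i$ from the union bound $\PP^n(\psi_n \neq i) \leq \PP^n(\varphi^0_n \neq i) + \PP^n(\varphi^1_n \neq i) \to 0$ provided by the consistency of both $\varphi^0_n$ and $\varphi^1_n$.

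There is no real obstacle here; once Theorem~\ref{thm:testable_iff_weakly_closed} is available, the only thing to check is that the gluing of two FP-tests is again an FP-test, which is immediate from stability of the open sets under finite intersection. The only subtlety to note is that the asymmetry in the statement of Theorem~\ref{thm:testable_iff_weakly_closed} between $H_0$ and $H_1$ is purely notational, so its symmetric form can be invoked freely.
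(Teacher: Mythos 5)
Your proposal is correct and follows essentially the same route as the paper: both directions reduce to Theorem~\ref{thm:testable_iff_weakly_closed} applied symmetrically to $H_0$ and $H_1$, and the forward construction intersects the decision regions of the two one-sided tests exactly as the paper does. Your explicit union-bound check of consistency for the glued test is a small point the paper leaves implicit, but otherwise the arguments coincide.
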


The next theorem characterises the strongest mode of testability: uniform consistency with bounded measurement precision. Under a precompactness assumption, this is equivalent to metric separation of $H_0$ and $H_1$ in the bounded Lipschitz metric.

\begin{theorem}\label{thm:testable_iff_weakly_separated}
	For the following statements we have the implications \ref{thm:testable_iff_weakly_separated:separation} $\implies$ \ref{thm:testable_iff_weakly_separated:test} $\implies$ \ref{thm:testable_iff_weakly_separated:topological_condition}:
	\begin{enumerate}
		\item\label{thm:testable_iff_weakly_separated:separation} $d_{BL}(H_0, H_1) > 0$ and $W := H_0 \cup H_1$ satisfies the uniform Glivenko--Cantelli property;
		\item\label{thm:testable_iff_weakly_separated:test} there exists a uniformly consistent BP-test, e.g. with for all $i\in\{0,1\}$
		\begin{align*}
			\lim_{n\to\infty}\sup_{\PP \in H_i}\PP^n(\varphi_n \neq i) =0;
		\end{align*}
		\item\label{thm:testable_iff_weakly_separated:topological_condition} $H_0$ and $H_1$ have disjoint closures in $\Pcal(\Xcal)$.
	\end{enumerate}
	If additionally $\Xcal$ is complete and $W$ is precompact in the weak topology, then it satisfies the uniform Glivenko--Cantelli property and \ref{thm:testable_iff_weakly_separated:separation} $\iff$ \ref{thm:testable_iff_weakly_separated:test} $\iff$ \ref{thm:testable_iff_weakly_separated:topological_condition}.
\end{theorem}
The proofs are given below in Section \ref{sec:proofs}.
Throughout one can equivalently consider strong consistency and weak consistency, by Theorem \ref{thm:consistency_equiv}.
Theorem \ref{thm:testable_equiv_error} shows that the uniform error control of Theorem \ref{thm:testable_iff_weakly_closed}, which is now stated as uniform consistency under $H_i$, can equivalently be stated as controlling the level under $H_i$, either asymptotically, or for finite samples. The uniform consistency of Theorem \ref{thm:testable_iff_weakly_separated} can equivalently be stated as control of the asymptotic level under $H_0$ and $H_1$. Similarly, it can be shown that the `uniform consistency' of Theorem \ref{thm:testable_iff_weakly_clopen} can equivalently be stated as asymptotic or finite-sample error control of these specific kinds of errors.


Theorem \ref{thm:testable_iff_weakly_closed} shows that uniform control of $\PP^n(\varphi_n \neq i)$ is equivalent to uniform control of $\PP^n(\varphi_n = 1-i)$. For two-sided error control, the distinction matters: pointwise convergence of the suspension region (Theorem \ref{thm:testable_iff_weakly_clopen}) corresponds to different topological properties of the hypotheses than uniform convergence of the suspension region (Theorem \ref{thm:testable_iff_weakly_separated}).

Theorem \ref{thm:testable_iff_weakly_separated} shows that disjoint closures of the hypotheses is necessary for the existence of a uniformly consistent BP-test. The sufficient conditions are metric separation $d_{BL}(H_0, H_1) > 0$ and the uniform Glivenko--Cantelli property \eqref{eqn:uniform_gc}. Note that disjoint closures do not imply metric separation without precompactness: for instance, $H_0 = \{\delta_k : k\in \NN\}$ and $H_1 = \{\delta_{k+1/k} : k\in\NN\}$ on $\RR$ have disjoint closures but $d_{BL}(H_0, H_1) = 0$. If $H_0\cup H_1$ is precompact (equivalently, uniformly tight by Prokhorov's theorem), disjoint closures implies metric separation and \eqref{eqn:uniform_gc} holds automatically, so the two directions yield an equivalence. The implication \ref{thm:testable_iff_weakly_separated:separation} $\implies$ \ref{thm:testable_iff_weakly_separated:test} also covers non-precompact examples: for instance, one can uniformly consistently test whether the mean of a Gaussian with bounded variance is smaller than $-1$ or larger than $1$, since $d_{BL}(H_0, H_1) > 0$ and the uniform Glivenko--Cantelli property holds.

The topological condition of each theorem implies the topological condition of the previous theorem: metric separation implies the hypotheses to be clopen in the subspace topology, which implies $H_0$ to be closed, which implies $H_0$ and $H_1$ to be $F_\sigma$ (since the weak topology is a separable metric topology).

\smallskip
The four theorems yield FP-tests of differing precision strength. The constructions in the proofs of Theorems \ref{thm:discernible_iff_weakly_fsigma}, \ref{thm:testable_iff_weakly_closed} and \ref{thm:testable_iff_weakly_clopen} produce critical regions that are open and disjoint in $\Xcal^n$, so the verdict can be determined by measurements of (unbounded) finite precision --- the precision required may depend on the location of the sample within the critical region (see Section \ref{sec:finite_precision}). The construction in the proof of Theorem \ref{thm:testable_iff_weakly_separated} is stronger: it is a BP-test with metric separation of the critical regions by $0<\delta < d_{BL}(H_0, H_1)$ that can be taken arbitrarily close to $d_{BL}(H_0, H_1)$.

\smallskip
These results are readily applicable to the example from the introduction, since the parameter space $[0,1]$ with the Euclidean topology is homeomorphic to the set of Bernoulli distributions equipped with the weak topology. The hypothesis $H_1$ in (\ref{eqn:ex:untestable}) is not $F_\sigma$, so there does not exist a consistent FP-test for this problem. The hypotheses from (\ref{eqn:ex:fsigma}) are both $F_\sigma$ so there exists a consistent test, but no form of uniform error control is possible since neither $H_0$ nor $H_1$ is closed. In (\ref{eqn:ex:closed}) $H_0$ is closed, so there exists a consistent FP-test with uniform error control under $H_0$. Both hypotheses of (\ref{eqn:ex:clopen}) are clopen in the relative topology on $W$, so there exists a consistent FP-test with the type of error control as in Theorem \ref{thm:testable_iff_weakly_clopen}, but uniform consistency as in Theorem \ref{thm:testable_iff_weakly_separated} is not feasible. In (\ref{eqn:ex:separated}) the two hypotheses are relatively compact and have disjoint closures in the ambient space, so there exists a uniformly consistent test.

\smallskip
In general, these sufficient conditions for various types of testability are not constructive: the tests of Theorems \ref{thm:discernible_iff_weakly_fsigma}, \ref{thm:testable_iff_weakly_closed} and \ref{thm:testable_iff_weakly_clopen} are constructed by expressing the hypotheses in terms of the subbasis elements $\{\PP\in W : \PP(A)>q\}$ of the weak topology. For example, in Theorem \ref{thm:testable_iff_weakly_closed} one might know that $H_1$ is open without knowing an explicit representation in terms of the subbasis elements.
However, for the examples (\ref{eqn:ex:fsigma}), (\ref{eqn:ex:closed}) and (\ref{eqn:ex:clopen}) these subbasis representations are easily derived, allowing us to give explicit tests, following the constructions in the proofs. We explicitly provide the test for the pair of hypotheses $H_0 = [0,1]\cap \QQ$ and $H_1 = [0,1] \cap (\QQ + \sqrt{2})$ of (\ref{eqn:ex:clopen}) in Example \ref{ex:fsigma} below.
The test of Theorem \ref{thm:testable_iff_weakly_separated} requires the computation of the BL-distance between the empirical measure and the hypotheses, which might be computationally intractable as well.

\smallskip
These sufficient conditions for FP-testability also imply binary testability with corresponding modes of error control, by joining the suspension region $\{\varphi_n=2\}$ with either $\{\varphi_n=0\}$ or $\{\varphi_n=1\}$. In Theorem \ref{thm:discernible_iff_weakly_fsigma}, merging the suspension region with $\{\varphi_n=0\}$ or $\{\varphi_n=1\}$ both give a consistent test. Actually, the topological condition of $H_0$ and $H_1$ being $F_\sigma$ is equivalent to the existence of two binary testing sequences: one with $\{\varphi_n=0\}$ open, and one with $\{\varphi_n=1\}$ open. This can rather straightforwardly be deduced from the proof of Theorem \ref{thm:discernible_iff_weakly_fsigma}.
For the test of clause \ref{thm:testable_iff_weakly_closed:test_strong_error_control} of Theorem \ref{thm:testable_iff_weakly_closed} it does not matter for maintaining error control whether $\{\varphi_n=2\}$ is joined with $\{\varphi_n=0\}$ or $\{\varphi_n=1\}$, but for the test of clause \ref{thm:testable_iff_weakly_closed:test_weak_error_control} the suspension region must be joined with $\{\varphi_n=0\}$ to maintain the error control under $H_0$. It can be shown that $H_0$ being closed is equivalent to the existence of a binary testing sequence with $\{\varphi_n=1\}$ open and uniform error control under $H_0$.
For Theorem \ref{thm:testable_iff_weakly_clopen} the FP-test cannot be converted to a binary test while maintaining the same error control. Finally, for Theorem \ref{thm:testable_iff_weakly_separated} the binary test can be constructed both ways while maintaining uniform consistency.

\smallskip
Results similar to Theorems \ref{thm:discernible_iff_weakly_fsigma} -- \ref{thm:testable_iff_weakly_separated} exist in the literature.
Theorem \ref{thm:discernible_iff_weakly_fsigma} generalises \cite{dembo1994topological} (Theorem 2) who provide the similar characterisation that, under the assumption that all measures in $W$ have uniformly integrable densities, there exists a consistent binary test if and only if the hypotheses are $F_\sigma$ in the weak topology. \cite{genin2017topology} (Theorem 4.3) drop the uniform integrability assumption (but the assumption of having densities remains), for which they show that the $F_\sigma$ condition is equivalent to the existence of a binary testing sequence $(\varphi_n)_{n\in\NN}$ with $\PP(\partial\{\varphi_n = 0\}) = \PP(\partial\{\varphi_n = 1\}) = 0$ for all $\PP\in W$. \cite{ermakov2017consistent} (Theorem 4.4) considers the topology of setwise convergence instead of the weak topology for which he shows -- under the assumption that $W$ is contained in a $\sigma$-compact set -- that the $F_\sigma$ condition is equivalent to the existence of a binary testing sequence, without any regularity of the critical regions.

An analogue of Theorem \ref{thm:testable_iff_weakly_closed} is also shown by \cite{genin2017topology} (Theorem 4.1) under the assumption of having densities, for binary tests with $\PP(\partial\{\varphi_n = 0\}) = \PP(\partial\{\varphi_n = 1\}) = 0$ for all $\PP\in W$. \cite{ermakov2017consistent} (Theorem 4.3) shows that when $H_0$ and $H_1$ are contained in respectively a compact set and a $\sigma$-compact set, consistent binary testability with uniform consistency under $H_0$ is equivalent to $H_0$ being closed and $H_1$ being $F_\sigma$ in the topology of setwise convergence.

Theorem \ref{thm:testable_iff_weakly_clopen} is also given by \cite{genin2018topology} (Theorem 3.2.3) under the assumption of having densities, for binary tests with $\PP(\partial\{\varphi_n = 0\}) = \PP(\partial\{\varphi_n = 1\}) = 0$ for all $\PP\in W$.

Theorem \ref{thm:testable_iff_weakly_separated} is similar to \cite{ermakov2017consistent} (Theorem 4.1), who shows that if $W$ is contained in a compact set in the topology of setwise convergence, then $H_0$ and $H_1$ having disjoint closures is equivalent to the existence of a consistent binary test. Note that compactness in the topology of setwise convergence implies that the weak topology and the topology of setwise convergence coincide, so our results give weaker sufficient conditions for the existence of uniformly consistent tests.

\begin{example}\label{ex:fsigma}
	Denoting $\PP_p = \mathrm{Bernoulli}(p)$, the hypotheses $H_0: p\in [0,1]\cap\QQ$ and $H_1 : p \in [0,1]\cap(\QQ + \sqrt{2})$ can be specified as the countable unions of singletons $H_0 = \{\PP_{p^0_m} : m \in \NN\}$ and $H_1 = \{\PP_{p^1_m} : m \in \NN\}$, where $p^0_m$ and $p^1_m$ are enumerations of $[0,1]\cap\QQ$ and $[0,1]\cap(\QQ + \sqrt{2})$ respectively.
	For testing the closed set $\{\PP_{p^i_m}\}$ against $W\setminus \{\PP_{p^i_m}\}$, Theorem \ref{thm:testable_iff_weakly_closed} gives the test
	\begin{equation*}
		\varphi_{i,n}^m(X_1, ..., X_n) =
		\begin{cases}
			i   & \text{ if }\overline{X}_n \in p_m^i + I_n;    \\
			1-i & \text{ if }\overline{X}_n \notin p_m^i + I_n,
		\end{cases}
	\end{equation*}
	where we write $p+ I_n := (p-t_n^\alpha, p+t_n^\alpha)$ and $t_n^\alpha := \sqrt{\frac{1}{2n}\ln(\pi^2n^2/6\alpha)}$. This gives us the strongly consistent test
	\begin{equation*}
		\varphi_n(X_1, ..., X_n) =
		\begin{cases}
			0 & \text{ if $\exists m \leq n : \overline{X}_n \in p_m^0 + I_n$ and $\overline{X}_n \notin p_k^1 + I_n$ for all $k \leq m$;} \\
			1 & \text{ if $\exists m \leq n : \overline{X}_n \in p_m^1 + I_n$ and $\overline{X}_n \notin p_k^0 + I_n$ for all $k \leq m$;} \\
			2 & \text{ otherwise.}
		\end{cases}
	\end{equation*}
	Another way to see that we indeed have strong consistency, is that if $X_1, X_2, ... \sim \PP_{p^*} \in H_1$, there is an $m$ such that $p_m^1 = p^*$. For sufficiently large $n$ we have $\overline{X}_n \in p^* + I_n$ and $\overline{X}_n \notin p^0_k + I_n$ for all $k \leq m$.
\end{example}

\subsection{Proofs}\label{sec:proofs}
The proofs that the existence of certain tests imply certain topological conditions on the hypotheses are stand-alone. There is however a dependency between the proofs that certain topological conditions imply the existence of FP-tests with the desired properties.
We first prove Theorem \ref{thm:testable_iff_weakly_closed}. From that one, we can build under $F_\sigma$ conditions consistent tests in Theorem \ref{thm:discernible_iff_weakly_fsigma}, and under clopen conditions we can build tests with the two-sided error control as required in Theorem \ref{thm:testable_iff_weakly_clopen}. To prove Theorem \ref{thm:testable_iff_weakly_separated}, we use an entirely different proof strategy using uniform convergence of empirical measures.

\subsubsection{Proof of Theorem \ref{thm:testable_iff_weakly_closed}}
We prove the result for $i=0$, corresponding to uniform error control under $H_0$, which is considered to be weakly closed. The main argument for the existence of a test with uniform error control is that if $H_1$ is open, we can write it as a countable union of finite intersections of subbasis elements $\{\PP \in W : \PP(A) > q\}$. For each disjoint pair of subbasis elements there exists a test with the required error control. The proofs largely follow the structure of the proof of \cite{genin2017topology}, Theorem 4.1.

\begin{lemma}\label{thm:subbasis_is_testable}
	Let $W\subseteq \Pcal(\Xcal)$ be given, let $A\subseteq \Xcal$ be open and let $q\in[0,1]$. For the hypotheses $H_0 := \{\PP\in W : \PP(A) \leq q\}$ and $H_1 = \{\PP \in W : \PP(A) > q\}$, for every $\alpha > 0$ there exists a strongly consistent FP-testing sequence $(\varphi_n)_{n\in\NN}$ with $\sup_{\PP\in H_0}\sum_{n=1}^\infty\PP^n(\varphi_n \neq 0) \leq \alpha$.
\end{lemma}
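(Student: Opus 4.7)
The natural test idea is to compare the empirical frequency of $A$ with the threshold $q$. However, $\I_A$ is only lower semi-continuous when $A$ is open, so the raw empirical mean $\tfrac{1}{n}\sum \I_A(X_i)$ yields decision regions of the form $\{\hat{p}_n < t\}$ that need not be open in $\Xcal^n$. To ensure that $\{\varphi_n=0\}$ and $\{\varphi_n=1\}$ are both open, I would replace $\I_A$ with continuous approximations from below. A convenient choice is $f_n(x) := \min\!\pars{1,\, n\cdot d(x, \Xcal \setminus A)}$, which is continuous in $x$, satisfies $0\le f_n \le \I_A$, and $f_n(x)\uparrow \I_A(x)$ pointwise on $\Xcal$. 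Then $\hat{\mu}_n(x_1,\dots,x_n) := \tfrac{1}{n}\sum_{i=1}^n f_n(x_i)$ is continuous on $\Xcal^n$.

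Next I would define the FP-test, for a threshold sequence $\epsilon_n \downarrow 0$ to be fixed below,
\begin{equation*}
\varphi_n := \begin{cases} 1 & \text{if } \hat{\mu}_n > q + \epsilon_n, \\ 0 & \text{if } \hat{\mu}_n < q + \epsilon_n/2, \\ 2 & \text{otherwise.}\end{cases}
\end{equation*}
By continuity of $\hat{\mu}_n$, both $\{\varphi_n=0\}$ and $\{\varphi_n=1\}$ are open in $\Xcal^n$, so $\varphi_n$ is indeed an FP-test.

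For the error analysis, set $\mu_n(\PP) := \EE_\PP[f_n(X)]$. Under $\PP\in H_0$ we have $\mu_n(\PP) \le \PP(A) \le q$ (since $f_n\le \I_A$), so by Hoeffding's inequality applied to the bounded i.i.d.\ variables $f_n(X_i)\in[0,1]$,
\begin{equation*}
\PP^n(\varphi_n \neq 0) \le \PP^n\pars{\hat{\mu}_n \ge q + \epsilon_n/2} \le \PP^n\pars{\hat{\mu}_n - \mu_n(\PP) \ge \epsilon_n/2} \le \exp(-n\epsilon_n^2/2).
\end{equation*}
Given $\alpha>0$, the plan is to pick $\epsilon_n$ so that $\exp(-n\epsilon_n^2/2)\le \alpha\cdot 2^{-n}$; this forces $\epsilon_n = \Theta(\sqrt{\log n /n})$, which tends to $0$. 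Summing gives $\sup_{\PP\in H_0}\sum_{n=1}^\infty \PP^n(\varphi_n \neq 0)\le \alpha$, yielding both the asserted uniform error bound and, via Borel–Cantelli, strong consistency on $H_0$.

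For consistency on $H_1$, fix $\PP \in H_1$, i.e.\ $\PP(A) > q$. By monotone convergence, $\mu_n(\PP)\uparrow \PP(A)$, so there exists $\eta > 0$ and $N$ with $\mu_n(\PP) \ge q + \eta$ for all $n\ge N$. Since $\epsilon_n \to 0$, for $n$ large enough $\mu_n(\PP) - (q + \epsilon_n) \ge \eta/2$, and Hoeffding again gives $\PP^n(\varphi_n \neq 1) \le \PP^n(\hat{\mu}_n \le q + \epsilon_n) \le \exp(-n\eta^2/2)$, which is summable, so Borel–Cantelli gives strong consistency on $H_1$ as well.

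\paragraph{Main obstacle.} The delicate point is the interplay between three things: the decision regions must be genuinely open in $\Xcal^n$ (forcing a continuous surrogate $f_n$ rather than $\I_A$), the surrogate's bias $\PP(A)-\mu_n(\PP)$ under $H_1$ must eventually be smaller than the shrinking margin $\epsilon_n$, and $\epsilon_n$ must still decay slowly enough that the Hoeffding tails are summable to $\alpha$. Balancing these three conditions — in particular checking that $\epsilon_n = \Theta(\sqrt{\log n/n})$ is simultaneously small enough for the $\sum_n \PP^n(\varphi_n\neq 0)\le \alpha$ bound to hold uniformly on $H_0$, and large enough that $\mu_n(\PP) > q+\epsilon_n$ is eventually satisfied at each $\PP\in H_1$ — is the only subtle part of the argument.
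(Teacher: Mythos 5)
Your proposal is essentially the paper's argument: the paper also approximates $A$ from inside by $n$-dependent open objects (it uses the indicators of $\mathrm{ext}(A^c_{1/n})$ and of the outer neighbourhood $A^c_{1/n}$, where you use the continuous minorants $f_n$), applies Hoeffding with a margin $t_n=\Theta(\sqrt{\log n/n})$ calibrated so that the type-I errors sum to $\alpha$, and handles $H_1$ via the monotone convergence $\mu_n(\PP)\uparrow\PP(A)>q$ together with $\epsilon_n\downarrow 0$; your continuous-statistic version makes openness of the decision regions immediate, whereas the paper writes them as unions of open boxes. One slip to fix: the intermediate target $\exp(-n\epsilon_n^2/2)\leq\alpha\cdot 2^{-n}$ is unattainable with $\epsilon_n\to 0$ (it forces $\epsilon_n\geq\sqrt{2\ln 2}$); you should instead aim for a polynomially summable bound such as $6\alpha/(\pi^2 n^2)$, which does yield $\epsilon_n=\Theta(\sqrt{\log n/n})$ as you state and is exactly the paper's choice of $t_n$.
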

\begin{proof}
	Let $\alpha > 0$ be given, let $A^c_{1/n} := \bigcup_{a\in A^c}B(a, 1/n)$ be the $1/n$-neighborhood of $A^c$, and define
	\begin{equation*}
		\varphi_n(X_1, ..., X_n) :=
		\begin{cases}
			0 & \text{ if }\frac{1}{n}\sum_{i=1}^n\I_{A^c_{1/n}}(X_i) > 1 - q - t_n;              \\
			1 & \text{ if }\frac{1}{n}\sum_{i=1}^n\I_{\mathrm{ext}(A^c_{1/n})}(X_i) \geq q + t_n; \\
			2 & \text{otherwise},
		\end{cases}
	\end{equation*}
	where $t_n := \sqrt{\frac{1}{2n}\ln(\pi^2 n^2/6\alpha)}$.

	For $\PP\in H_0$ we have $\PP(A) \leq q$, so the fact that $A^c_{1/n}\supseteq A^c$ and Hoeffding's inequality gives
	\begin{multline*}
		\PP^n(\varphi_n \neq 0) = \PP^n\left(\frac{1}{n}\sum_{i=1}^n\I_{A^c_{1/n}}(X_i) \leq 1 - q - t_n\right) \leq 	\PP^n\left(\frac{1}{n}\sum_{i=1}^n\I_{A^c}(X_i) \leq 1 - q - t_n\right)   \\
		= 	\PP^n\left(\frac{1}{n}\sum_{i=1}^n\I_{A}(X_i) \geq q + t_n\right)
		\leq 	\PP^n\left(\frac{1}{n}\sum_{i=1}^n\I_{A}(X_i) \geq \PP(A) + t_n\right)
		\leq e^{-2nt_n^2} = \frac{6\alpha}{\pi^2n^2},
	\end{multline*}
	hence $\sum_{n=1}^\infty\PP^n(\varphi_n \neq 0) \leq \alpha$. Strong consistency under the null then follows from the Borel-Cantelli lemma.

	If $\PP\in H_1$ then $\PP(A) > q$. Since $A= \bigcup_n \mathrm{ext}(A^c_{1/n})$, we have that   $\PP(\mathrm{ext}(A^c_{1/n})) \uparrow \PP(A).$ Therefore, there is an $M\in\NN$ such that $\PP(\mathrm{ext}(A^c_{1/M})) > q$. Since $\mathrm{ext}(A^c_{1/n}) \supseteq \mathrm{ext}(A^c_{1/M})$ for all $n\geq M$ we have for all these $n$ that $\frac{1}{n}\sum_{i=1}^n \I_{\mathrm{ext}(A^c_{1/n})}(X_i) \geq \frac{1}{n}\sum_{i=1}^n \I_{\mathrm{ext}(A^c_{1/M})}(X_i)$, which by the strong law of large numbers converges almost surely to $\PP(\mathrm{ext}(A^c_{1/M})) > q$. Combined with the fact that $t_n\downarrow 0$ we almost surely have that $\frac{1}{n}\sum_{i=1}^n \I_{\mathrm{ext}(A^c_{1/n})}(X_i) \geq q + t_n$ for all sufficiently large $n$, so $\varphi_n \to 1$ a.s.

	To prove that $\{\varphi_n = 0\}$ is open, let for $\gamma \in \{0,1\}^{n}$ the set $(A^c_{1/n})^\gamma \subseteq \Xcal^n$ be the Cartesian product of $n$ sets, where the $i$-th set is $A^c_{1/n}$ if $\gamma_i=1$, and $\Xcal$ if $\gamma_i=0$. For example, $\gamma = (1, ..., 1, 0)$ gives $(A^c_{1/n})^\gamma = A^c_{1/n}\times ... \times A^c_{1/n}\times \Xcal$. Since $A^c_{1/n}$ is open, so is $(A^c_{1/n})^\gamma$, hence $\{\varphi_n = 0\} = \bigcup \{(A^c_{1/n})^\gamma: \gamma \in \{0,1\}^{n}, |\gamma| > n(1 - q - t_n)\}$ is open as well. Since $\mathrm{ext}(A^c_{1/n})$ is open, we have analogously that $\{\varphi_n = 1\}$ is open as well.
\end{proof}

The following lemma shows that for pairs of hypotheses which are consistently testable with uniform error control under the null, the alternative hypotheses enjoy a topological structure: they are closed under countable unions and finite intersections.
For convenience in proving Theorem \ref{thm:testable_iff_weakly_closed}, the result is stated in terms of strong consistency and $\Sigma$-level $\alpha$ (Definition \ref{def:test_properties}), but by Theorem \ref{thm:testable_equiv_error} also holds for weak consistency and $w$-level $\alpha$.
\begin{lemma}\label{thm:testable_is_topology}
	Let $\{(H_0^{ij}, H_1^{ij}) : (i,j)\in\NN^2\}$ be pairs of disjoint hypotheses
	such that
	for every pair $(H_0^{ij}, H_1^{ij})$ and every $\alpha^{ij} > 0$ there exists a strongly consistent FP-test $\varphi_n^{ij}$ with $\sup_{\PP \in H_0^{ij}}\sum_{n=1}^\infty \PP^n(\varphi_n^{ij} \neq 0) \leq \alpha^{ij}$, then for the hypotheses $H_0 := \bigcap_{i=1}^\infty \bigcup_{j=1}^{m_i} H_0^{ij}$ and $H_1 := \bigcup_{i=1}^\infty \bigcap_{j=1}^{m_i} H_1^{ij}$ and any $\alpha > 0$ there exists a strongly consistent FP-test $\varphi_n$ with $\sup_{\PP \in H_0}\sum_{n=1}^\infty\PP^n(\varphi_n \neq 0) \leq \alpha$.
\end{lemma}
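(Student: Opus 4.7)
The plan is a two-layer construction mirroring the set-theoretic structure of the hypotheses. Fix $\alpha > 0$ and allocate in advance the per-pair budgets $\alpha^{ij} := \alpha \cdot 2^{-i-2}$, obtaining from the assumption strongly consistent FP-tests $\varphi_n^{ij}$ with $\sup_{\PP \in H_0^{ij}}\sum_n \PP^n(\varphi_n^{ij}\neq 0)\leq \alpha^{ij}$. I would first combine, for each fixed $i$, the finitely many tests $\varphi_n^{i1},\dots,\varphi_n^{im_i}$ into a single FP-test $\psi^i$ that distinguishes $\bigcup_j H_0^{ij}$ from $\bigcap_j H_1^{ij}$, and then combine the sequence $(\psi^i)_{i\in\NN}$ into an FP-test $\Phi$ where at sample size $n$ only the first $I(n)$ of the $\psi^i$ are consulted (say $I(n) = n$).

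For the inner layer, set $\{\psi_n^i = 1\} := \bigcap_{j=1}^{m_i}\{\varphi_n^{ij}=1\}$ and $\{\psi_n^i = 0\} := \bigcup_{j=1}^{m_i}\{\varphi_n^{ij}=0\}$, with the complement assigned to the suspension zone. Both regions are open (finite intersection resp.\ union of opens) and are disjoint. Because $\{\psi_n^i \neq 0\} \subseteq \{\varphi_n^{ij}\neq 0\}$ for every $j$, any $\PP\in H_0^{ij^*}$ satisfies $\sum_n\PP^n(\psi_n^i \neq 0) \leq \alpha^{ij^*} = \alpha\,2^{-i-2}$, and this bound is uniform over $\PP \in \bigcup_j H_0^{ij}$. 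Strong consistency under $\bigcap_j H_1^{ij}$ follows from strong consistency of each $\varphi_n^{ij}$: a finite intersection of almost-sure eventual events is almost sure.

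For the outer layer, set $\{\Phi_n=1\} := \bigcup_{i\leq I(n)}\{\psi_n^i=1\}$ and $\{\Phi_n=0\}:=\bigcap_{i\leq I(n)}\{\psi_n^i=0\}$; both are open and disjoint by the corresponding properties of the $\psi^i$. For $\PP \in H_0 = \bigcap_i \bigcup_j H_0^{ij}$, $\PP$ lies in the null of every $\psi^i$, so a union bound followed by swapping the order of summation yields
\begin{equation*}
	\sum_{n=1}^\infty \PP^n(\Phi_n \neq 0) \leq \sum_{n=1}^\infty \sum_{i\leq I(n)} \PP^n(\psi_n^i \neq 0) \leq \sum_{i=1}^\infty \sum_{n=1}^\infty \PP^n(\psi_n^i \neq 0) \leq \sum_{i=1}^\infty \alpha\,2^{-i-2} \leq \alpha,
\end{equation*}
uniformly over $\PP \in H_0$, and Borel--Cantelli then gives strong consistency under $H_0$. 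For $\PP \in H_1$, pick $i^*$ with $\PP\in\bigcap_j H_1^{i^* j}$; strong consistency of $\psi^{i^*}$ gives $\psi_n^{i^*} = 1$ eventually a.s., and since $I(n)\geq i^*$ for all $n\geq i^*$, this forces $\Phi_n = 1$ eventually a.s.

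The main obstacle is preserving openness of the decision regions under the \emph{countable} combination along $i$: without the truncation $I(n)$, the acceptance region $\{\Phi_n = 0\}$ would be a countable intersection of open sets, which need not be open. The cap $I(n)<\infty$ keeps both decision regions finite unions/intersections of opens, while $I(n)\to\infty$ ensures that every alternative $\bigcap_j H_1^{i^* j}$ is eventually in view of the test. The rest is bookkeeping of the geometric budget, which is arranged precisely so that the double sum converges to $\leq \alpha$ after the change of summation order.
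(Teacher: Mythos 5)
Your proof is correct and is essentially the paper's own argument: once you substitute the inner tests into the outer layer with $I(n)=n$, your decision regions $\{\Phi_n=0\}=\bigcap_{i\le n}\bigcup_{j}\{\varphi_n^{ij}=0\}$ and $\{\Phi_n=1\}=\bigcup_{i\le n}\bigcap_{j}\{\varphi_n^{ij}=1\}$ coincide with the paper's construction, and the geometric budget plus union bound and swap of summation under $H_0$, Borel--Cantelli, and the ``finite intersection of a.s.\ eventual events'' step under $H_1$ (the paper phrases this via the Fr\'echet inequality) are all the same. No gaps.
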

\begin{proof}
	Let $\alpha > 0$ be given. For the hypotheses $(H_0^{ij}, H_1^{ij})$, let $\varphi_n^{ij}$ be a strongly consistent FP-test with $\sum_{n=1}^\infty \PP^n(\varphi_n^{ij} \neq 0) \leq \alpha / 2^{i}$. Let $\varphi_n$ be the FP-test defined by
	\begin{align*}
		\{\varphi_n=0\}   & :=\bigcap_{i=1}^n \bigcup_{j=1}^{m_i} \{\varphi_n^{ij}=0\}   \\
		\{\varphi_n=1\}   & :=\bigcup_{i=1}^n \bigcap_{j=1}^{m_i} \{\varphi_n^{ij}=1\}   \\
		\{\varphi_n = 2\} & := \Xcal^n \setminus (\{\varphi_n=0\} \cup \{\varphi_n=1\}),
	\end{align*}
	then $\varphi_n$ has uniform error control under the null: if $\PP \in H_0$, then for every $i\in\NN$ there is a $j_i \leq m_i$ such that $\PP \in H_0^{i{j_i}}$, and since $\{\varphi_n\neq 0\} = \bigcup_{i=1}^n \bigcap_{j=1}^{m_i} \{\varphi_n^{ij}\neq 0\}\subseteq \bigcup_{i=1}^\infty \{\varphi_n^{i{j_i}}\neq 0\}$ we have
	\begin{align*}
		\sum_{n=1}^\infty\PP^n(\varphi_n \neq 0) & \leq \sum_{n=1}^\infty\PP^n\left(\cup_{i=1}^\infty\{\varphi_n^{ij_i} \neq 0\}\right) \leq \sum_{n=1}^\infty\sum_{i=1}^\infty\PP^n(\varphi_n^{ij_i} \neq 0) \leq \alpha.
	\end{align*}
	The Borel-Cantelli lemma then gives strong consistency under $H_0$.

	For every $\PP \in H_1$ there is an $i \in \NN$ such that $\PP\in \bigcap_{j=1}^{m_i}H_1^{ij}$. Because of the inclusion $\cap_{j=1}^{m_i}\{\varphi_n^{ij}=1\}\subseteq \{\varphi_n=1\}$ and the Fréchet inequality\footnote{For measurable sets $A_1, ... , A_n$ we have $\PP(\cap_{i=1}^n A_i) \geq \sum_{i=1}^n\PP(A_i) - n+1$.} we have for $n\geq i$ that
	\begin{multline*}
		\PP^\infty(\liminf_{n}\{\varphi_n = 1\}) = \PP^\infty(\liminf_{n\geq i}\{\varphi_n = 1\}) \\
		\geq \PP^\infty(\liminf_{n}\cap_{j=1}^{m_i}\{\varphi_n^{ij} = 1\}) \geq \sum_{j=1}^{m_i} \PP^\infty(\liminf_{n}\{\varphi_n^{ij} = 1\}) - m_i + 1 = 1,
	\end{multline*}
	so $\varphi_n$ is strongly consistent.
\end{proof}

We are now able to prove Theorem \ref{thm:testable_iff_weakly_closed}:
\begin{proof}
	\textcolor{white}{.}\\
	\vspace{-12pt}
	\item[\ref*{thm:testable_iff_weakly_closed:test_weak_error_control} $\implies$ \ref*{thm:testable_iff_weakly_closed:topological_condition}]
	By Theorem \ref{thm:testable_equiv_error}, \ref*{thm:testable_iff_weakly_closed:test_weak_error_control} is equivalent to the existence (for every $\alpha >0$) of a weakly consistent FP-test with $\sup_{\PP \in H_0}\PP^n(\varphi_n=1)\leq \alpha$ for all $n$.
	If $H_0$ is not closed, then there is a $\QQ \in H_1$ and a sequence $\{\PP_m\}_{m\in\NN}$ in $H_0$ such that $\PP_m \wto \QQ$.  The map $\PP \mapsto \PP^n$ is continuous (\citealp{billingsley1999convergence}, Theorem 2.8) hence $\PP_m^n\wto \QQ^n$. Let $\varphi_n$ be an FP-test with $\sup_{\PP \in H_0}\PP^n(\varphi_n=1)\leq \alpha$ for all $n$, then $\{\varphi_n=1\}$ is open so by the Portmanteau theorem we have $\QQ^n(\varphi_n = 1) \leq \liminf_m \PP_m^n(\varphi_n=1) \leq \alpha$, and hence $\varphi_n$ cannot be weakly consistent at $\QQ$.
	\item[\ref*{thm:testable_iff_weakly_closed:topological_condition} $\implies$ \ref*{thm:testable_iff_weakly_closed:test_strong_error_control}] If $H_1$ is open, then we can write $H_0$ and $H_1$ as
	\begin{align*}
		 & H_0 = \bigcap_{i=1}^\infty \bigcup_{j=1}^{m_i} \{\PP \in W : \PP(A_{ij}) \leq q_{ij}\}
		 &                                                                                        & H_1 = \bigcup_{i=1}^\infty \bigcap_{j=1}^{m_i} \{\PP \in W : \PP(A_{ij}) > q_{ij}\}
	\end{align*}
	for $A_{ij} \subseteq \Xcal$ open and $q_{ij} \in [0,1]$.
	By Lemma \ref{thm:subbasis_is_testable}, for each of these pairs of hypotheses $H_0^{ij} := \{\PP \in W : \PP(A_{ij}) \leq q_{ij}\}$ and $H_1^{ij} := \{\PP \in W : \PP(A_{ij}) > q_{ij}\}$, for each $\alpha^{ij} > 0$ there exists a strongly consistent FP-test $\varphi_n^{ij}$ with $\sup_{\PP\in H_0}\sum_{n=1}^\infty\PP^n(\varphi^{ij}_n \neq 0) \leq \alpha^{ij}$, and by Lemma \ref{thm:testable_is_topology} this implies that for $H_0, H_1$ there exists a strongly consistent FP-test $\varphi_n$ with $\sup_{\PP\in H_0}\sum_{n=1}^\infty\PP^n(\varphi_n \neq 0) < \alpha$. The result now follows from Theorem \ref{thm:testable_equiv_error}.
	\item[\ref*{thm:testable_iff_weakly_closed:test_strong_error_control} $\implies$ \ref*{thm:testable_iff_weakly_closed:test_weak_error_control}] Immediate.
\end{proof}

\subsubsection{Proof of Theorem \ref{thm:discernible_iff_weakly_fsigma}}\label{sec:discernibility}
We now show that consistent FP-testability is equivalent to the hypotheses being $F_\sigma$ in $W$.
\begin{proof}
	\textcolor{white}{.}\\
	\vspace{-12pt}
	\item[\ref*{thm:discernible_iff_weakly_fsigma:test} $\implies$ \ref*{thm:discernible_iff_weakly_fsigma:topological_condition}] Let $\varphi_n$ be a weakly consistent FP-test. For $i \in \{0,1\}$ we have $\PP\in H_i$ if and only if there is an $m\in\NN$ such that $\PP^n(\varphi_n = i) > 2/3$ for all $n\geq m$, hence we can write
	\begin{align*}
		H_0 & = \bigcup_{m=1}^\infty \bigcap_{n=m}^\infty \{\PP\in W : \PP^n(\varphi_n=1) \leq 1/3\}  \\
		H_1 & = \bigcup_{m=1}^\infty \bigcap_{n=m}^\infty \{\PP\in W : \PP^n(\varphi_n=0) \leq 1/3\}.
	\end{align*}
	Since sets of the form $\{\PP \in W : \PP(A) > q\}$ with $A \subseteq \Xcal$ open are open in the weak topology on $W$, and both critical regions $\{\varphi_n=0\}$ and $\{\varphi_n=1\}$ are open, the set $\{\PP\in W : \PP^n(\varphi_n = i) \leq 1/3\}$ is closed, so $H_0$ and $H_1$ are $F_\sigma$ sets in the weak topology.
	\item[\ref*{thm:discernible_iff_weakly_fsigma:topological_condition} $\implies$ \ref*{thm:discernible_iff_weakly_fsigma:test}] If $H_0$ and $H_1$ are disjoint $F_\sigma$ sets, then we can write $H_0 = \bigcup_{m=1}^\infty H_0^m$ and $H_1 = \bigcup_{m=1}^\infty H_1^m$ for closed sets $H_0^m$ and $H_1^m$.

	By Theorem \ref{thm:testable_iff_weakly_closed}, there exists for every $m\in\NN$, for the pair of hypotheses $(H_0^m, W\setminus H_0^m)$ a strongly consistent FP-test $\varphi_{0,n}^m$, where $\varphi_{0,n}^m=0$ corresponds to $H_0^m$ and where $\varphi_{0,n}^m=1$ corresponds to $W\setminus H_0^m$. Similarly, for the pair of hypotheses $(W\setminus H_1^m, H_1^m)$ there exists a strongly consistent FP-test $\varphi_{1,n}^m$ where $\varphi_{1,n}^m=0$ corresponds to $W\setminus H_1^m$ and $\varphi_{1,n}^m=1$ corresponds to $H_1^m$.
	Define the FP-test $\varphi_n$ such that
	\begin{align*}
		\{\varphi_n = 0\} & := \bigcup_{m=1}^n \{\varphi_{0,n}^m = 0\} \bigcap_{k=1}^m \{\varphi_{1,n}^k = 0\} \\
		\{\varphi_n = 1\} & := \bigcup_{m=1}^n \{\varphi_{1,n}^m = 1\} \bigcap_{k=1}^m \{\varphi_{0,n}^k = 1\} \\
		\{\varphi_n = 2\} & := \Xcal^n\setminus (\{\varphi_n = 0\} \cup \{\varphi_n = 1\}).
	\end{align*}
	For $\PP \in H_i$ there is a $m\in\NN$ such that $\PP \in H_i^m$, and note that $\PP \notin H_{1-i}^k$ for all $k\in\NN$, so we obtain
	\begin{multline*}
		\PP^\infty(\liminf_n \{\varphi_n=i\}) = \PP^\infty(\liminf_{n \geq m} \{\varphi_n=i\}) \\
		\geq \PP^\infty(\liminf_n \{\varphi_{i,n}^m = i\} \cap_{k=1}^m \{\varphi_{1-i,n}^k = i\})\\
		\geq \sum_{k=1}^m \PP^\infty(\liminf_n \{\varphi_{i,n}^m = i\} \cap \{\varphi_{1-i,n}^k = i\}) -m+1  \geq 1
	\end{multline*}
	by the Fréchet inequality and the consistency of all $\varphi_{i,n}^m$ and $\varphi_{1-i,n}^k$.
\end{proof}

\subsubsection{Proof of Theorem \ref{thm:testable_iff_weakly_clopen}}
Having Theorem \ref{thm:testable_iff_weakly_closed} at our disposal, we obtain the following characterisation of testability in terms of both hypotheses being clopen in $W$.
\begin{proof}
	\textcolor{white}{.}\\
	\vspace{-12pt}
	\item[\ref*{thm:testable_iff_weakly_clopen:test} $\implies$ \ref*{thm:testable_iff_weakly_clopen:topological_condition}] By Theorem \ref{thm:testable_iff_weakly_closed}, if $H_0$ is not closed in $W$ then there is no consistent test with
	\begin{equation*}
		\lim_{n\to\infty}\sup_{\PP \in H_0}\PP^n(\varphi_n=1) = 0
	\end{equation*}
	and if $H_1$ is not closed in $W$ then there is no test with
	\begin{equation*}
		\lim_{n\to\infty}\sup_{\PP \in H_1}\PP^n(\varphi_n=0) = 0.
	\end{equation*}
	\item[\ref*{thm:testable_iff_weakly_clopen:topological_condition} $\implies$ \ref*{thm:testable_iff_weakly_clopen:test}] By Theorem \ref{thm:testable_iff_weakly_closed} there exists for each $i\in\{0,1\}$ a strongly consistent FP-test $\varphi_n^i$ such that $\lim_{n}\sup_{\PP \in H_i}\PP^n(\varphi_n^i=1-i) = 0$.
	By defining the FP-test
	\begin{align*}
		\{\varphi_n=0\} & := \{\varphi_n^0=0\} \cap \{\varphi_n^1=0\}                 \\
		\{\varphi_n=1\} & := \{\varphi_n^0=1\} \cap \{\varphi_n^1=1\}                 \\
		\{\varphi_n=2\} & := \Xcal^n\setminus (\{\varphi_n=0\} \cup \{\varphi_n=1\}),
	\end{align*}
	we have for both $i\in\{0,1\}$ that $\lim_{n}\sup_{\PP \in H_i}\PP^n(\varphi_n=1-i) \leq \lim_{n}\sup_{\PP \in H_i}\PP^n(\varphi_n^i=1-i) = 0$.
	By Fréchet's inequality we have $\PP^\infty(\liminf_{n}\varphi_n=i) = \PP^\infty(\liminf_{n}\{\varphi_n^0=i\}\cap \liminf_{n}\{\varphi_n^1=i\})\geq 1$ so $\varphi_n$ is strongly consistent.
\end{proof}

\subsubsection{Proof of Theorem \ref{thm:testable_iff_weakly_separated}}
\begin{proof}
	\textcolor{white}{.}\\
	\vspace{-12pt}
	\item[\ref*{thm:testable_iff_weakly_separated:separation} $\implies$ \ref*{thm:testable_iff_weakly_separated:test}] Let $d_{BL}(H_0, H_1) > 0$ and assume \eqref{eqn:uniform_gc}. Let $\gamma < d_{BL}(H_0, H_1)/2$, let $\PP_n^x$ denote the empirical measure at $x \in \Xcal^n$, and let
	\begin{align*}
		\varphi_n(x) := \begin{cases}
			                0 & \text{if $d_{BL}(\PP_n^{x}, H_0) < \gamma$} \\
			                1 & \text{if $d_{BL}(\PP_n^{x}, H_1) < \gamma$} \\
			                2 & \text{otherwise},
		                \end{cases}
	\end{align*}
	then by weak continuity of $x\mapsto \PP_n^x$ the sets $\{\varphi_n=0\}$ and $\{\varphi_n=1\}$ are open and disjoint, and we will verify below that $\varphi_n$ is in fact a BP-test.
	For $\PP \in H_0$ we have $d_{BL}(\PP_n^x, H_0) \leq d_{BL}(\PP_n^x, \PP)$, so $\PP^n(\varphi_n \neq 0) \leq \PP^n(d_{BL}(\PP_n^x, \PP) \geq \gamma)$.
	By \eqref{eqn:uniform_gc}, let $r_n := \sup_{\PP \in H_0\cup H_1}\EE_\PP[d_{BL}(\PP_n^X, \PP)]$, so $r_n\to 0$. For $n$ large enough that $r_n < \gamma/2$:
	\begin{align*}
		\PP^n(\varphi_n\neq 0) & \leq \PP^n(d_{BL}(\PP_n^x, \PP) \geq \gamma)                                                 \\
		                       & \leq \PP^n(d_{BL}(\PP_n^x, \PP) - \EE_\PP\!\left[d_{BL}(\PP_n^X, \PP)\right] \geq \gamma/2).
	\end{align*}
	Since for $x=(x_1, ..., x_n)$ and any $i$ and $x_i'\in\Xcal$ we have with $x' = (x_1, ..., x_i', ..., x_n)$ the bound $\left|d_{BL}(\PP_n^x, \PP) - d_{BL}(\PP_n^{x'}, \PP)\right|\leq 2/n$, McDiarmid's inequality (\citealp{vandervaart2023weak}, Proposition 2.15.3) gives
	\begin{equation*}
		\sup_{\PP \in H_0}\PP^n(\varphi_n\neq 0)\leq \exp(-n\gamma^2/8).
	\end{equation*}
	The argument for $H_1$ is identical, giving uniform consistency.

	We further show that $\{\varphi_n=0\}$ and $\{\varphi_n=1\}$ satisfy the metric-separation condition: there exists $\delta > 0$ such that $d_{\infty}(\{\varphi_n=0\}, \{\varphi_n=1\}) > \delta$ for all $n\in\NN$, with $d_{\infty}$ the $\ell^\infty$ product metric on $\Xcal^n$. For $x\in\{\varphi_n=0\}$ and $y\in\{\varphi_n=1\}$, the triangle inequality gives
	\begin{equation*}
		d_{BL}(\PP_n^x, \PP_n^y) \geq d_{BL}(H_0, H_1) - d_{BL}(\PP_n^x, H_0) - d_{BL}(\PP_n^y, H_1) > d_{BL}(H_0, H_1) - 2\gamma > 0.
	\end{equation*}
	On the other hand, since $|f(x_i) - f(y_i)|\leq d_\Xcal(x_i, y_i)$ for $f\in\BL(\Xcal; \RR)$,
	\begin{equation*}
		d_{BL}(\PP_n^x, \PP_n^y) = \sup_{f\in\BL}\left|\tfrac{1}{n}\sum_{i=1}^n (f(x_i) - f(y_i))\right| \leq \tfrac{1}{n}\sum_{i=1}^n d_\Xcal(x_i, y_i) \leq \max_{1\leq i\leq n} d_\Xcal(x_i, y_i).
	\end{equation*}
	Combining the two bounds, $d_{\infty}(x, y) > d_{BL}(H_0, H_1) - 2\gamma$ for any $x\in\{\varphi_n=0\}$ and $y\in\{\varphi_n=1\}$, so taking $\delta := d_{BL}(H_0, H_1) - 2\gamma > 0$ gives $d_{\infty}(\{\varphi_n=0\}, \{\varphi_n=1\}) > \delta$ for all $n\in\NN$.

	\item[\ref*{thm:testable_iff_weakly_separated:test} $\implies$ \ref*{thm:testable_iff_weakly_separated:topological_condition}] Let $\varphi_n$ be a uniformly consistent BP-test, so $\{\varphi_n=0\}$ and $\{\varphi_n=1\}$ are metrically separated and thus have disjoint closures. For every $\varepsilon>0$ there is an $N>0$ such that $\inf_{\PP \in H_i}\PP^n(\varphi_n = i) \geq 1-\varepsilon$ for all $n\geq N$, for $i=0$ and $i=1$. Suppose for contradiction that $\overline{H_0}\cap\overline{H_1}\neq\emptyset$, and let $\QQ$ be a point in this intersection. Then there are sequences $\PP_m^0$ and $\PP_m^1$ in $H_0$ and $H_1$ respectively converging to $\QQ$. Since the map $\PP\mapsto \PP^n$ is continuous (\citealp{billingsley1999convergence}, Theorem 2.8), we have $(\PP_m^i)^n \wto \QQ^n$. Since $\{\varphi_n=0\}$ and $\{\varphi_n=1\}$ have disjoint closures, the Portmanteau theorem for closed sets gives
	\begin{align*}
		1 & \geq \QQ^n\!\left(\overline{\{\varphi_n=0\}}\right) + \QQ^n\!\left(\overline{\{\varphi_n=1\}}\right)                                 \\
		  & \geq \limsup_m (\PP_m^0)^n\!\left(\overline{\{\varphi_n=0\}}\right) + \limsup_m (\PP_m^1)^n\!\left(\overline{\{\varphi_n=1\}}\right) \\
		  & \geq \limsup_m (\PP_m^0)^n\!\left(\{\varphi_n=0\}\right) + \limsup_m (\PP_m^1)^n\!\left(\{\varphi_n=1\}\right)                       \\
		  & \geq \inf_{\PP \in H_0} \PP^n\!\left(\{\varphi_n=0\}\right) + \inf_{\PP \in H_1} \PP^n\!\left(\{\varphi_n=1\}\right)                 \\
		  & \geq 2(1-\varepsilon),
	\end{align*}
	so we reach a contradiction for $\varepsilon < 1/2$.

	\vspace{1em}
	For the final claim, note that if $H_0\cup H_1$ is precompact, then disjoint closures implies $d_{BL}(H_0, H_1) > 0$ (since the closures are disjoint compact sets).
	It remains to show that the uniform Glivenko--Cantelli property holds. Since $\Xcal$ is a complete separable metric space, by Prokhorov's theorem $W$ is uniformly tight (\citealp{bogachev2007measurevol2}, Theorem 8.6.2), so for every $\varepsilon > 0$ there is a compact $K\subseteq \Xcal$ such that $\PP(\Xcal\setminus K) \leq \varepsilon$ for all $\PP\in W$. This gives
	\begin{align*}
		d_{BL}(\PP_n^x, \PP) & \leq \sup\left\{\left|\int_K f\diff\PP_n^x - \int_K f \diff\PP\right| : f\in \BL \right\} + \PP_n^x(\Xcal\setminus K) + \PP(\Xcal\setminus K),
	\end{align*}
	so $\EE[d_{BL}(\PP_n^x, \PP)] \leq \EE[\sup\left\{\left|\int_K f\diff\PP_n^x - \int_K f \diff\PP\right| : f\in \BL \right\}] + 2\varepsilon$. Since $\BL(\Xcal; \RR)$ is equicontinuous and bounded, by Ascoli's theorem (\citealp{munkres2014topology}, Theorem 47.1) it is precompact in the topology of uniform convergence on compacta. Restricted to $K$, this gives precompactness of $\BL(K;\RR)$ in the sup-norm, hence for every $\varepsilon >0$ there is a $N_\varepsilon$ and functions $f_1, ..., f_{N_\varepsilon} \in \BL(K;\RR)$ such that for every $f\in \BL(K;\RR)$, $\|f-f_i \|\leq \varepsilon$ for some $i$. The triangle inequality then gives
	\begin{equation*}
		\sup\left\{\left|\int_K f\diff\PP_n^x - \int_K f \diff\PP\right| : f\in \BL \right\} \leq \max\left\{\left|\int_K f_i\diff\PP_n^x - \int_K f_i \diff\PP\right| : i=1, ..., N_\varepsilon \right\} + 2\varepsilon.
	\end{equation*}
	By Jensen's inequality applied to the square we obtain
	\begin{align*}
		\EE\left[\left|\int_K f_i\diff\PP_n^x - \int_K f_i \diff\PP\right|\right] & = \EE\left[\left|\frac{1}{n}\sum_{j=1}^n \I_K(X_j)f_i(X_j) - \EE_\PP[\I_K f_i ]\right|\right]           \\
		                                                                          & \leq \sqrt{\mathrm{Var}\left(\frac{1}{n}\sum_{j=1}^n \I_K(X_j)f_i(X_j)\right)} \leq \frac{1}{\sqrt{n}},
	\end{align*}
	so by combining these intermediate results we obtain
	\begin{equation*}
		\EE_\PP\left[d_{BL}(\PP_n^X, \PP)\right] \leq \frac{N_\varepsilon}{\sqrt{n}} + 4\varepsilon,
	\end{equation*}
	so \eqref{eqn:uniform_gc} holds.
\end{proof}

\section{The hardness of conditional independence testing}\label{sec:ci_untestable}
We now turn to applications of the preceding results to conditional independence testing.
\cite{lauritzen2024total} shows that conditional independence is closed under limits in the total variation metric. However, in general, weak convergence does not preserve conditional independence: for a weakly convergent sequence $\PP_n(X, Y, Z)\wto \PP(X, Y, Z)$ with $X\Indep_{\PP_n} Y\given Z$ for all $n\in\NN$, we might have $X\nIndep_{\PP} Y\given Z$.

\begin{example}[\citealp{lauritzen1996graphical}, Example 3.11]\label{ex:gauss}
	Consider a trivariate Gaussian $\PP_n(X, Y, Z)$ with mean zero, and (conditional) covariance matrices given by
	\begin{equation*}
		\Sigma^n = \begin{pmatrix}
			1                  & \frac{1}{2}        & \frac{1}{\sqrt{n}} \\
			\frac{1}{2}        & 1                  & \frac{1}{\sqrt{n}} \\
			\frac{1}{\sqrt{n}} & \frac{1}{\sqrt{n}} & \frac{2}{n}
		\end{pmatrix}
		\quad \text{ and } \quad
		\Sigma_{XY|Z}^n = \begin{pmatrix}
			\frac{1}{2} & 0           \\
			0           & \frac{1}{2}
		\end{pmatrix},
	\end{equation*}
	so we have conditional independence $X\Indep_{\PP_n} Y\given Z$. Since $\PP_n(X, Y, Z)\wto \PP(X, Y, Z)$ with $\PP(X, Y, Z)$ a degenerate multivariate Gaussian distribution with (conditional) covariance matrices
	\begin{equation*}
		\Sigma = \begin{pmatrix}
			1           & \frac{1}{2} & 0 \\
			\frac{1}{2} & 1           & 0 \\
			0           & 0           & 0
		\end{pmatrix}
		\quad \text{ and } \quad
		\Sigma_{XY|Z} = \begin{pmatrix}
			1           & \frac{1}{2} \\
			\frac{1}{2} & 1
		\end{pmatrix},
	\end{equation*}
	we have $X\nIndep_{\PP} Y\given Z$, so conditional independence is not maintained under weak limits. Other examples can be found in \cite{barbie2014topology}
	and \cite{saldi2022geometry}.
\end{example}

Combining this example with Theorem \ref{thm:testable_iff_weakly_closed}, we immediately obtain that for real-valued random variables, conditional independence is not consistently FP-testable with uniform error control under the null. We strengthen this result by allowing for rather general sample spaces, and showing that consistent FP-tests don't exist for conditional independence. We use the following lemmas:

\begin{lemma}\label{thm:ci_dense}
	Let $\Xcal, \Ycal, \Zcal$ be complete separable metric spaces with $\Zcal$ perfect\footnote{That is, $\Zcal$ has no isolated points. Since $\Zcal$ is also Polish, it is uncountable (\citealp{kechris2012classical}, Corollary 6.3).} then $H_0 := \{\PP : X\Indep_\PP Y\given Z\}$ is dense in $\Pcal(\Xcal\times\Ycal\times\Zcal)$.
\end{lemma}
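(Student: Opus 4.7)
The plan rests on the observation that a finitely supported measure $\mu = \frac{1}{n}\sum_{i=1}^n \delta_{(x_i, y_i, z_i)}$ with pairwise distinct $z_i$ automatically satisfies $X\Indep_\mu Y\given Z$: for each atom $\tilde z = z_i$ of the $Z$-marginal, the regular conditional distribution of $(X,Y)$ given $Z = z_i$ is $\delta_{(x_i, y_i)} = \delta_{x_i}\otimes\delta_{y_i}$, which manifestly factorises. So I would aim to approximate an arbitrary $\PP$ by empirical-type measures whose $Z$-coordinates have been nudged to be distinct.

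Concretely, fix $\PP\in \Pcal(\Xcal\times\Ycal\times\Zcal)$. By Varadarajan's theorem the empirical measure $\mu_n = \frac{1}{n}\sum_{i=1}^n \delta_{(X_i, Y_i, Z_i)}$ of i.i.d.\ samples from $\PP$ converges weakly to $\PP$ almost surely, so I can fix a realisation $(x_i, y_i, z_i)_{i\in\NN}$ along such an event. For each $n$, I would then choose $\tilde z_1^n, \dots, \tilde z_n^n \in \Zcal$ pairwise distinct with $d_\Zcal(\tilde z_i^n, z_i) < 1/n$, inductively: at step $i$, the open $1/n$-ball around $z_i$ is infinite because $\Zcal$ is Polish and perfect, so it meets the complement of the finite set $\{\tilde z_1^n,\dots,\tilde z_{i-1}^n\}$. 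Define $\tilde\mu_n := \frac{1}{n}\sum_{i=1}^n\delta_{(x_i, y_i, \tilde z_i^n)}$.

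To conclude, equip the product with the sum of the coordinate metrics; any $f\in\BL(\Xcal\times\Ycal\times\Zcal;\RR)$ is then 1-Lipschitz in this product metric, so
\[
\left|\int f\,\diff\mu_n - \int f\,\diff\tilde\mu_n\right| \leq \frac{1}{n}\sum_{i=1}^n d_\Zcal(z_i, \tilde z_i^n) \leq \frac{1}{n}.
\]
Taking the supremum over $f\in\BL$ gives $d_{BL}(\mu_n,\tilde\mu_n)\to 0$, so by the triangle inequality $\tilde\mu_n\wto \PP$. Each $\tilde\mu_n$ lies in $H_0$ by the opening observation, which establishes density.

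The only nonroutine ingredient is the $Z$-perturbation step, and this is exactly where perfectness of $\Zcal$ is used: we need enough room in $\Zcal$ to shift $n$ arbitrary points to distinct locations with vanishing error. Without it, e.g.\ if $\Zcal$ were discrete or finite, the $Z$-marginal of any approximating measure would be forced to have repeated atoms and the factorisation argument would collapse. The remaining ingredients --- Varadarajan's theorem, the bounded-Lipschitz estimate, and the Dirac-factorisation of the conditional --- are standard.
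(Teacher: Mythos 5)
Your proof is correct and follows essentially the same route as the paper's: approximate $\PP$ by finitely supported measures, perturb the $Z$-coordinates to be pairwise distinct using perfectness of $\Zcal$, and observe that the resulting conditionals are products of Dirac measures. The only cosmetic difference is that you obtain the finitely supported approximants via Varadarajan's theorem on empirical measures, whereas the paper cites the general weak density of convex combinations of point masses.
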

\begin{proof}
	Probability measures of the form $\PP = \sum_{i=1}^n a_i \delta_{k_i}$ with $k_i=(x_i, y_i, z_i) \in \Xcal\times \Ycal\times \Zcal$ are dense in $\Pcal(\Xcal\times \Ycal\times \Zcal)$ (\citealp{bogachev2007measurevol2}, Example 8.1.6). If there are $i \neq j$ such that $z_i=z_j$, then for every $\varepsilon>0$ there is a $z_j'\in\Zcal$ such that $z_j' \neq z_i$ for all $i=1, ..., n$ and $d(z_j, z_j') < \varepsilon / a_j$. Defining $k_j' := (x_j, y_j, z_j')$ and $\PP' := \sum_{i=1}^n a_i \delta_{k_i} + a_j(\delta_{k_j'} - \delta_{k_j})$ we have that $d_{BL}(\PP, \PP') < \varepsilon$. Hence, convex combinations of point-masses with distinct $Z$-coordinates are dense in $\Pcal(\Xcal\times \Ycal\times \Zcal)$. For any such $\PP'$ and any $z_i$ in the support of $\PP'(Z)$ we have $\PP'(X, Y\given Z=z_i) = \delta_{(x_i, y_i)} = \delta_{x_i}\delta_{y_i}$, so $X\Indep_{\PP'} Y\given Z$.
\end{proof}


\begin{lemma}\label{thm:cd_dense}
	Let $\Xcal, \Ycal, \Zcal$ be complete separable metric spaces, then the set $H_1 := \{\PP : X\nIndep_\PP Y\given Z\}$ is dense in $\Pcal(\Xcal\times\Ycal\times\Zcal)$.
\end{lemma}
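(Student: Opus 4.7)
My plan is a mixture-perturbation argument. Given any $\PP\in\Pcal(\Xcal\times\Ycal\times\Zcal)$ and $\eta\in(0,1)$, I would set $\PP_\eta := (1-\eta)\PP + \eta\QQ$ for a fixed $\QQ\in H_1$ chosen independently of $\PP$, and show $\PP_\eta\in H_1$ for every such $\eta$. Since $d_{BL}(\PP,\PP_\eta)\leq \|\PP-\PP_\eta\|_{TV}\leq 2\eta$, this yields $\PP_\eta\wto\PP$ as $\eta\downarrow 0$ and hence density. Assume $|\Xcal|\geq 2$ and $|\Ycal|\geq 2$ (otherwise $H_1=\emptyset$ and the claim is vacuous), fix $z_0\in\Zcal$, distinct $x_1,x_2\in\Xcal$ and distinct $y_1,y_2\in\Ycal$, and set
\[
    \QQ := \tfrac{1}{2}\delta_{(x_1,y_1,z_0)} + \tfrac{1}{2}\delta_{(x_2,y_2,z_0)}.
\]
The conditional $\QQ(X,Y\given Z=z_0) = \tfrac{1}{2}(\delta_{(x_1,y_1)}+\delta_{(x_2,y_2)})$ has two-point support not forming a Cartesian product in $\Xcal\times\Ycal$, hence is not a product measure, so $\QQ\in H_1$.

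The crux is to show that $\nu_{z_0}:=\PP_\eta(X,Y\given Z=z_0)$ is not a product on $\Xcal\times\Ycal$; this is enough, because $\PP_\eta(\{z_0\})\geq\eta>0$ both makes the conditional well-defined and witnesses failure of conditional independence. I split into two cases. If $\PP(\{z_0\})=0$, then $\nu_{z_0}$ coincides with $\QQ(X,Y\given Z=z_0)$, which was just shown to be non-product. If $\PP(\{z_0\})>0$ and $\PP\in H_0$ (the complementary case $\PP\in H_1$ is trivial with $\PP_\eta=\PP$), then the singleton $\{z_0\}$ carries positive $\PP(Z)$-mass, so the $\PP(Z)$-almost sure factorization encoded in $\PP\in H_0$ specializes to that singleton and forces $\rho_{z_0}:=\PP(X,Y\given Z=z_0)$ to be a product. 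Decomposing $\nu_{z_0}=\beta\rho_{z_0}+(1-\beta)\sigma$ with $\sigma:=\tfrac{1}{2}(\delta_{(x_1,y_1)}+\delta_{(x_2,y_2)})$ and $\beta\in(0,1)$, the $2\times 2$ cross-product identity necessary for factorization, with $\rho_{ij}:=\rho_{z_0}(\{(x_i,y_j)\})$ and $\nu_{ij}:=\nu_{z_0}(\{(x_i,y_j)\})$, reads
\[
    \nu_{11}\nu_{22}-\nu_{12}\nu_{21} = \beta^2(\rho_{11}\rho_{22}-\rho_{12}\rho_{21}) + \beta(1-\beta)\tfrac{\rho_{11}+\rho_{22}}{2} + \tfrac{(1-\beta)^2}{4}.
\]
Factorization of $\rho_{z_0}$ kills the first bracket, the remaining two terms are non-negative, and $(1-\beta)^2/4>0$ strictly for $\beta<1$. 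Hence the cross-product equality fails, $\nu_{z_0}$ cannot be a product, and $\PP_\eta\in H_1$.

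The main obstacle I anticipate is the careful extraction of pointwise factorization of $\rho_{z_0}$ from the $\PP(Z)$-almost sure factorization implicit in $\PP\in H_0$: this step is legitimate precisely because $\{z_0\}$ is a positive-$\PP(Z)$-mass atom, on which conditional expectations are unambiguously defined by the usual ratio. After that, the cross-product computation and the total-variation bound $d_{BL}(\PP,\PP_\eta)\leq 2\eta$ are elementary. An alternative route through first approximating $\PP$ by finitely supported measures with distinct $z$-coordinates and then splitting one atom into two non-rectangular atoms would also work, but only under additional regularity on $\Zcal$ (e.g.\ no isolated points); the mixture approach treats atomless and atomic $\PP(Z)$ uniformly and makes no such assumption.
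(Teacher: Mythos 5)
Your argument is correct, and it takes a genuinely different route from the paper. The paper's proof is a one-line reduction: it cites an external result (\citealp{boeken2025are}, Corollary 1) for density of $H_1$ in total variation and then invokes $d_{BL}\le d_{TV}$. You instead give a self-contained mixture-perturbation construction; since your perturbation satisfies $d_{TV}(\PP,\PP_\eta)\le 2\eta$, you are in effect re-proving the cited total-variation density from scratch. The key steps all check out: $\sigma=\tfrac12(\delta_{(x_1,y_1)}+\delta_{(x_2,y_2)})$ is non-product because a product measure would assign mass at least $1/4$ to $(x_1,y_2)$; on the positive-mass atom $\{z_0\}$ the a.s.\ factorization in the definition of $X\Indep_\PP Y\given Z$ does pin down $\rho_{z_0}$ as a product measure; and your $2\times2$ determinant computation correctly shows $\nu_{11}\nu_{22}-\nu_{12}\nu_{21}\ge(1-\beta)^2/4>0$, which is incompatible with $\nu_{z_0}$ being a product. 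Your observation that the argument needs no perfectness of $\Zcal$ (in contrast to Lemma \ref{thm:ci_dense}) is consistent with the paper, which likewise drops that hypothesis for $H_1$. Two cosmetic points: in the degenerate case $|\Xcal|=1$ or $|\Ycal|=1$ the set $H_1$ is empty and hence not dense, so the claim there is false rather than vacuous --- but this defect is inherited from the lemma statement itself and affects the paper's version equally; and in Case 2 you should state explicitly that when $\PP\in H_1$ you take the constant approximating sequence rather than asserting $\PP_\eta=\PP$. Neither affects correctness. What your approach buys is transparency and self-containedness; what the paper's buys is brevity and a pointer to a stronger quantitative statement.
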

\begin{proof}
	In \cite{boeken2026are}, Corollary 1 it is shown that $H_1 := \{\PP : X\nIndep_\PP Y\given Z\}$ is dense in $\Pcal(\Xcal\times\Ycal\times\Zcal)$ with respect to the total variation metric. Since $d_{BL}\leq d_{TV}$, the assertion holds.
\end{proof}

Note that in general the two hypotheses can be dense and consistently testable, see e.g.\ Example \ref{ex:fsigma} where we provide a consistent test for the pair of hypotheses $H_0 = [0,1]\cap \QQ$ and $H_1 = [0,1] \cap (\QQ + \sqrt{2})$. However, this only occurs in special cases, since $H_0$ and $H_1$ being dense and $F_\sigma$ implies that $W$ is meager in itself. By the Baire Category Theorem the space $\Pcal(\Xcal\times\Ycal\times\Zcal)$ is not meager in itself, so we obtain the following impossibility result.
\begin{theorem}\label{thm:ci_not_discernible}
	Let $\Xcal, \Ycal, \Zcal$ be complete separable metric spaces with $\Zcal$ perfect, then the hypotheses $H_0 := \{\PP : X\Indep_{\PP} Y\given Z\}$ and $H_1 := \{\PP : X\nIndep_{\PP} Y\given Z\}$ are not consistently FP-testable.
\end{theorem}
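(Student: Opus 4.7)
The plan is to argue by contradiction via the Baire category theorem, using Theorem \ref{thm:discernible_iff_weakly_fsigma} to translate consistent FP-testability into a purely topological statement that Lemmas \ref{thm:ci_dense} and \ref{thm:cd_dense} will rule out. The key observation is that here $W = H_0 \cup H_1$ equals the full ambient space $\Pcal(\Xcal \times \Ycal \times \Zcal)$, so the subspace topology on $W$ coincides with the weak topology, and we have a Polish (hence Baire) space at our disposal.

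Concretely, I would first note that $\Xcal \times \Ycal \times \Zcal$ is a complete separable metric space, and hence $\Pcal(\Xcal \times \Ycal \times \Zcal)$ equipped with the weak topology is Polish (it is metrised completely by $d_{BL}$ as recorded in Section \ref{sec:weak_topology}). In particular, $W$ is a Baire space. Now suppose towards contradiction that a consistent FP-test exists. By Theorem \ref{thm:discernible_iff_weakly_fsigma}, both $H_0$ and $H_1$ are $F_\sigma$ in $W$: write $H_0 = \bigcup_{n} F_n^0$ and $H_1 = \bigcup_{n} F_n^1$ with each $F_n^i$ weakly closed in $W$.

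Next I would invoke the density lemmas. Since $\Zcal$ is perfect, Lemma \ref{thm:ci_dense} gives that $H_0$ is dense in $W$; Lemma \ref{thm:cd_dense} gives that $H_1$ is dense as well. Because $H_0$ and $H_1$ are disjoint and each is dense, each has empty interior in $W$: indeed, any nonempty open $U \subseteq H_0$ would be disjoint from the dense set $H_1$, which is impossible. Consequently every $F_n^0 \subseteq H_0$ is a closed set contained in a set with empty interior, so $F_n^0$ itself has empty interior, i.e.\ is nowhere dense. The same holds for every $F_n^1$. Therefore $H_0$ and $H_1$ are both meager, and so
\begin{equation*}
W = H_0 \cup H_1 = \bigcup_n F_n^0 \;\cup\; \bigcup_n F_n^1
\end{equation*}
is a countable union of nowhere dense sets, contradicting the Baire property of the Polish space $W$.

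I do not anticipate any serious obstacle: the nontrivial work is entirely packaged in Lemmas \ref{thm:ci_dense} and \ref{thm:cd_dense} (which require the mild structural assumption that $\Zcal$ has no isolated points) and in Theorem \ref{thm:discernible_iff_weakly_fsigma}. The only small subtlety is making sure the density of each hypothesis is used against the complementary hypothesis (to get empty interior), and then passing from empty interior to nowhere density through the closedness of the $F_n^i$; both steps are immediate once the statement is set up correctly. If one wished to weaken the hypothesis that $\Zcal$ is perfect, the bottleneck would be Lemma \ref{thm:ci_dense}, since without perturbation room in $\Zcal$ one cannot readily approximate an arbitrary measure by a conditionally independent discrete measure.
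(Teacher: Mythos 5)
Your proposal is correct and follows essentially the same route as the paper: reduce consistent FP-testability to both hypotheses being $F_\sigma$ via Theorem \ref{thm:discernible_iff_weakly_fsigma}, invoke Lemmas \ref{thm:ci_dense} and \ref{thm:cd_dense} for density, and derive a contradiction with the Baire category theorem in the Polish space $\Pcal(\Xcal\times\Ycal\times\Zcal)$. The only cosmetic difference is that the paper deduces meagerness of each $H_i$ from the fact that the complement of a dense $G_\delta$ set is meager, whereas you argue directly that each closed piece of the $F_\sigma$ decomposition is nowhere dense; both steps are immediate and equivalent.
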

\begin{proof}
	Since $\Pcal(\Xcal\times \Ycal\times\Zcal)$ equipped with the weak topology is complete, the Baire Category Theorem implies that it is not meager in itself.
	By Lemmas \ref{thm:ci_dense} and \ref{thm:cd_dense} we have that $H_0$ and $H_1$ are dense.
	If $H_0$ and $H_1$ are both $F_\sigma$, then they are both $G_\delta$ (i.e.\ a countable intersection of open sets). The complement of a dense $G_\delta$ set is meager,
	implying that $H_0$ and $H_1$ are meager. The space $W = H_0 \cup H_1$ would then be meager as well giving a contradiction, so $H_0$ and $H_1$ cannot both be $F_\sigma$. From Theorem \ref{thm:discernible_iff_weakly_fsigma} we conclude that $H_0, H_1$ are not consistently FP-testable.
\end{proof}

The preceding result is for example applicable when $X, Y, Z$ take values in (complete separable metric) function spaces such as $C([0,1], \RR^d)$ \citep{manten2024signature}, $L^p([0,1], \RR^d)$ \citep{lundborg2022conditional} or the Skorohod space $\DD([0,1], \RR^d)$ \citep{boeken2024dynamic}, for example when they represent measurements of continuous-time stochastic processes.

\section{Weak closedness of conditional independence}\label{sec:ci_closed}
Having established that conditional independence is generally not FP-testable, we will now search for conditions under which conditional independence is closed, $F_\sigma$, or metrically separated, implying the existence of a (uniformly) consistent FP-test or BP-test. In particular, we will show that if $\PP_n(X, Y, Z)\wto \PP(X, Y, Z)$ and the conditional distributions $z\mapsto \PP_n(X\given Z=z)$ share a modulus of continuity, then conditional independence $X\Indep Y\given Z$ is maintained in the limit. First, we formalise what is meant by a conditional distribution having a modulus of continuity.

A \emph{Markov kernel} $\PP(X \given Z)$ is a measurable map $\Zcal \to \Pcal(\Xcal)$.\footnote{Here, $\Pcal(\Xcal)$ is equipped with the Borel $\sigma$-algebra generated by the weak topology, which coincides with the smallest $\sigma$-algebra that makes for all $D\in \Bcal(\Xcal)$ the evaluation map $\PP \mapsto \PP(X \in D)$ measurable (\citealp{ghosal2017fundamentals}, Proposition A.5). This definition of a Markov kernel is equivalent to the common definition that for all $D\in \Bcal(\Xcal)$ the map $z\mapsto \PP(X\in D\given Z=z)$ is measurable, and for every $z\in \Zcal$ the map $D\mapsto \PP(X\in D\given Z=z)$ is a probability measure.} For Markov kernels $\PP(X \given Y), \PP(Y\given Z)$, their \emph{product} is defined as the Markov kernel
\begin{equation*}
	\PP(X \given Y) \otimes \PP(Y\given Z) : \Zcal \to \Pcal(\Xcal\times \Ycal), ~ z\mapsto \left(D \mapsto \int_{D}\diff\PP(x \given y)\diff\PP(y \given z)\right)
\end{equation*}
where $D\in \Bcal(\Xcal\times\Ycal)$.
If $\Xcal$ is a complete separable metric space, then for a given joint distribution $\PP(X, Z)$ there exists a Markov kernel (a version of the \emph{conditional distribution}) $\PP(X \given Z)$ such that $\PP(X, Z) = \PP(X \given Z) \otimes \PP(Z)$. This Markov kernel is $\PP(Z)$-almost everywhere uniquely defined.

Throughout, fix a non-decreasing function $\omega : [0,\infty)\to [0,\infty)$ with $\omega(0) = 0$, continuous at $0$, and satisfying $\liminf_{t\downarrow 0}\omega(t)/t > 0$.\footnote{Moduli with $\lim_{t\downarrow 0}\omega(t)/t = 0$ would force any $\omega$-equicontinuous function on a path-connected metric space to be constant, and are excluded.} Our main focus is on distributions $\PP(X, Y, Z)$ for which there exists a version of the conditional distribution $\PP(X\given Z)$ that has \emph{modulus of continuity $\omega$}, i.e.
\begin{equation*}
	d_{BL}(\PP(X \given Z=z), \PP(X \given Z=z')) \leq \omega(d_\Zcal(z, z'))
\end{equation*}
for all $z, z' \in \Zcal$, where $d_\Zcal$ denotes the metric on $\Zcal$ -- we will always take this continuous version of the conditional distribution, which is uniquely determined on the support of $\PP(Z)$. Special cases are $L$-Lipschitz Markov kernels for $\omega(t) = Lt$, Hölder-continuous Markov kernels for $\omega(t) = Lt^\alpha$ and $\alpha\in(0,1]$, log-Hölder Markov kernels for $\omega(t) = L/\log(1/t)^\alpha$, and other regularity classes.
It follows from the definitions that the Markov kernel $z \mapsto \PP(X\given Z=z)$ has modulus of continuity $\omega$ if and only if the conditional expectation $z \mapsto \EE[f(X) \given Z=z]$ has modulus of continuity $\omega$ for all $f\in \BL(\Xcal; \RR)$.

First, we have that the modulus-of-continuity assumption is closed in the weak topology:
\begin{restatable}[]{theorem}{lipmkclosed}
	\label{thm:lip_mk_closed}
	Let $\Xcal, \Zcal$ be complete separable metric spaces with $\Zcal$ locally compact. The set $W_\omega := \{\PP \in \Pcal(\Xcal\times \Zcal) : \PP(X\given Z) \text{ has modulus of continuity } \omega\}$ is closed in the weak topology.
\end{restatable}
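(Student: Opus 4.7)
The plan is to construct an $L$-Lipschitz version of $\PP(X\given Z)$ for the limit by passing to subsequential limits of the kernels $k_n$ associated with $\PP_n \wto \PP$. Fix any $f \in \BL(\Xcal;\RR)$ with $\|f\|_{BL}\le 1$ and consider the conditional expectations $F_n^f(z) := \int f \, dk_n(z)$. By hypothesis each $F_n^f$ is $L$-Lipschitz and uniformly bounded by $1$, so Arzela--Ascoli yields precompactness of $\{F_n^f\}_n$ in $C(\Zcal,\RR)$ with the topology of uniform convergence on compacta, giving an $L$-Lipschitz limit $F^f$ along a subsequence.

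Next I would identify $F^f$ with a version of $\EE_\PP[f(X)\given Z=\cdot]$. For $g \in C_b(\Zcal)$, weak convergence gives $\int F_n^f g\,d\PP_n(z)=\int f(x)g(z)\,d\PP_n(x,z)\to\int f(x)g(z)\,d\PP(x,z)$, while the combination of Prokhorov tightness of $\{\PP_n(Z)\}$ and uniform-on-compacta convergence $F_n^f\to F^f$ implies $\int|F_n^f - F^f|\,d\PP_n(z)\to 0$, so $\int F^f g\,d\PP_n\to\int F^f g\,d\PP$ (using that $F^f g\in C_b$). Comparing the two limits gives $\int F^f g\,d\PP=\int f(x)g(z)\,d\PP(x,z)$ for all $g \in C_b(\Zcal)$, so $F^f$ is a version of the desired conditional expectation.

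Because $(\Pcal(\Xcal),d_{BL})$ is separable, I would pick a countable determining family $\{f_j\}\subseteq \BL(\Xcal;\RR)$ (with uniformly bounded BL-norm) and diagonalise to obtain a single subsequence $(n_k)$ along which $F_{n_k}^{f_j}\to F^{f_j}$ uniformly on compacta for every $j$. For each $z\in\Zcal$ I would then define $k(z) \in \Pcal(\Xcal)$ as a subsequential weak limit of $\{k_{n_k}(z)\}$; by the determining property $k(z)$ is unique, independent of the extraction, and satisfies $\int f_j\,dk(z)=F^{f_j}(z)$ for every $j$. Lipschitz continuity $d_{BL}(k(z),k(z'))\le L\,d(z,z')$ follows from $d_{BL}(k_n(z),k_n(z'))\le L\,d(z,z')$ together with continuity of $d_{BL}$ under joint weak convergence, and agreement with $\PP(X\given Z)$ on $\supp \PP(Z)$ follows from the identification in the previous step combined with the determining property.

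The main obstacle will be establishing tightness of $\{k_{n_k}(z)\}_k$ for each fixed $z\in\Zcal$, which is what underwrites the extraction of the weak limit $k(z)$. Joint tightness of $\{\PP_n\}$ delivers, via Markov's inequality on a compact $K\subseteq \Xcal\times\Zcal$ with $K_X:=\pi_X(K)$ and $K_Z:=\pi_Z(K)$, only that $k_n(z')(K_X)\ge 1-\sqrt\eps$ on a set of $z'$ of large $\PP_n(Z)$-mass inside $K_Z$. To bridge from such ``good'' points $z'$ to an arbitrary target $z\in\Zcal$, I would test $k_n$ against a bounded Lipschitz smoothed indicator $\phi$ of a slight enlargement of $K_X$ and invoke $|\int\phi\,dk_n(z)-\int\phi\,dk_n(z')|\le L\|\phi\|_{BL}\,d(z,z')$, where compactness of $K_Z$ bounds $d(z,z')$ uniformly. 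This propagation of tightness through the Lipschitz structure of the kernels is the delicate step, but together with the preceding construction it yields the required $L$-Lipschitz kernel for $\PP$, closing the set.
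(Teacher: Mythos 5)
Your overall architecture is the same as the paper's: its Lemma \ref{thm:disintegration_continuous} applies Ascoli's theorem to the kernels as $L$-Lipschitz maps $\Zcal\to(\Pcal(\Xcal),d_{BL})$ and then identifies the uniform-on-compacta limit with a version of $\PP(X\given Z)$ via a $d_{BL}$ triangle inequality; your Steps 1--3 are a scalarised (test-function-by-test-function) rendition of exactly that, and they are fine. The genuine gap is in the last step, and you have correctly located where the difficulty sits: to reassemble the numbers $F^{f_j}(z)$ into a probability measure $k(z)$ you need relative compactness of $\{k_{n_k}(z)\}_k$ at each fixed $z$. Your bridging argument does not deliver this. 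The Lipschitz hypothesis only gives $\left|\int\phi\,dk_n(z)-\int\phi\,dk_n(z')\right|\le L\,d(z,z')$, and smoothing $\phi$ cannot improve the bound because you control the kernels in $d_{BL}$, not in $W_1$; for an arbitrary target $z$ the distance to the nearest ``good'' $z'$ is merely \emph{bounded}, not small, and a fixed error of size $L\,d(z,z')$ cannot force $k_n(z)(K_X)\ge 1-\varepsilon$ for every $\varepsilon$. Worse, the claim you are trying to establish is false: take $\PP_n=\delta_{(0,0)}$ on $\RR\times\RR$ with the $2$-Lipschitz versions $k_n(z)=(1-\min(1,|z|))\,\delta_0+\min(1,|z|)\,\delta_n$; these are legitimate versions of $\PP_n(X\given Z)$ since $\PP_n(Z)=\delta_0$, yet $k_n(1)=\delta_n$ is not tight, so no weak limit $k(1)$ exists (the limiting functionals $F^{f}(1)$ describe mass escaping to infinity, not a probability measure).

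The repair is to run your bridge only at points $z$ with $\liminf_n\PP_n(Z\in B(z,\delta))>0$ for every $\delta>0$, which by the Portmanteau theorem includes all of $\supp\PP(Z)$: there you can choose good points $z'\in B(z,\delta)$ for all large $n$, the error becomes $L\delta$, and tightness of $\{k_n(z)\}_n$ does follow. This produces an $L$-Lipschitz kernel on $\supp\PP(Z)$ disintegrating $\PP$, which is the substance of the theorem (the paper's own appeal to Ascoli quietly assumes the pointwise precompactness you are worrying about, and its conclusion likewise only identifies the limit kernel $\PP(Z)$-a.s.). If one insists on a version that is $L$-Lipschitz on all of $\Zcal$, as the paper's definition literally requires, a further extension step from the closed set $\supp\PP(Z)$ to $\Zcal$ is needed; that should be stated as a separate step rather than folded into the tightness argument.
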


Further, a shared modulus of continuity for the conditional $\PP(X\given Z)$ or $\PP(Y\given Z)$ implies that conditional independence is closed in the weak topology.

\begin{restatable}[]{theorem}{independencecontinuous}
	\label{thm:independence_continuous}
	Let $\Xcal, \Ycal, \Zcal$ be complete separable metric spaces with $\Zcal$ locally compact. If $\PP_n(X, Y, Z)\wto \PP(X, Y, Z)$ where $X\Indep_{\PP_n} Y\given Z$ and $\PP_n(X\given Z)$ has modulus of continuity $\omega$ for all $n\in\NN$, then $X\Indep_{\PP} Y\given Z$.
\end{restatable}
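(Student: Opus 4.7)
The plan is to pass the conditional independence identity through the weak limit by exploiting the equi-Lipschitz hypothesis via a subsequential Arzelà-Ascoli argument. Fix $f\in\BL(\Xcal;\RR)$, $g\in\BL(\Ycal;\RR)$, $h\in\BL(\Zcal;\RR)$, and set $F_n(z):=\EE_{\PP_n}[f(X)\given Z=z]$ and $F(z):=\EE_\PP[f(X)\given Z=z]$. Since $\PP_n(X\given Z)$ is $L$-Lipschitz, each $F_n$ is $L\|f\|_{BL}$-Lipschitz and bounded by $\|f\|_\infty$; applying Theorem~\ref{thm:lip_mk_closed} to the marginal convergence $\PP_n(X,Z)\wto\PP(X,Z)$ gives the same bounds for $F$. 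Product functions $f(x)g(y)h(z)$ characterise conditional independence, so it suffices to show
$$\EE_\PP[f(X)g(Y)h(Z)] = \EE_\PP\bigl[F(Z)g(Y)h(Z)\bigr].$$
Using $X\Indep_{\PP_n} Y\given Z$ together with weak convergence, the left-hand side equals $\lim_n \EE_{\PP_n}[F_n(Z)g(Y)h(Z)]$, so the task reduces to identifying this limit with $\EE_\PP[F(Z)g(Y)h(Z)]$.

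For this I would argue by subsequences. By Prokhorov, $\{\PP_n(Y,Z)\}$ and hence $\{\PP_n(Z)\}$ are uniformly tight. Given any subsequence of $\{F_n\}$, the equi-boundedness and equi-Lipschitzness together with Arzelà-Ascoli on a countable exhausting family of compact subsets of $\Zcal$ supplied by tightness, combined with a diagonal extraction, yield a further subsequence $\{F_{n_k}\}$ converging uniformly on each such compact set to an $L\|f\|_{BL}$-Lipschitz function $F^*$ on $\Zcal$. A triangle-inequality estimate, splitting $\Zcal$ into a compact set of large $\PP_n(Z)$-mass and its complement and using the equi-bound on $F_{n_k}-F^*$, then gives
$$\int F_{n_k}\,\phi\,d\PP_{n_k} \longrightarrow \int F^*\,\phi\,d\PP(Z)$$
for every bounded continuous $\phi\colon\Zcal\to\RR$. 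Taking $\phi=h$, the left-hand side equals $\EE_{\PP_{n_k}}[f(X)h(Z)]$, which converges to $\EE_\PP[f(X)h(Z)]=\int F\,h\,d\PP(Z)$ by weak convergence, so $\int(F^*-F)h\,d\PP(Z)=0$ for every $h\in\BL(\Zcal;\RR)$, forcing $F^*=F$ $\PP(Z)$-almost surely.

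The same triangle-and-tightness estimate, now on $\Ycal\times\Zcal$ with the bounded continuous test function $(y,z)\mapsto g(y)h(z)$ and the weak convergence $\PP_n(Y,Z)\wto\PP(Y,Z)$, gives $\EE_{\PP_{n_k}}[F_{n_k}(Z)g(Y)h(Z)]\to\EE_\PP[F^*(Z)g(Y)h(Z)]=\EE_\PP[F(Z)g(Y)h(Z)]$ along every extracted sub-subsequence. Since every subsequence of the original sequence admits a further subsequence with this common limit, the full sequence $\EE_{\PP_n}[F_n(Z)g(Y)h(Z)]$ converges to $\EE_\PP[F(Z)g(Y)h(Z)]$, establishing the required identity and hence $X\Indep_\PP Y\given Z$. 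The main obstacle I anticipate is the identification step $F^*=F$: the individual $F_n$ are only canonical on $\PP_n(Z)$-full sets whose location shifts with $n$, so pointwise comparison across $n$ is unavailable; the Lipschitz hypothesis is precisely what selects continuous representatives and supplies the equicontinuity needed to invoke Arzelà-Ascoli on the compact sets where the $\PP_n(Z)$ concentrate.
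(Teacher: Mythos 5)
Your proof is correct and follows essentially the same route as the paper's: an Arzel\`a--Ascoli extraction from the equi-Lipschitz family of conditionals, identification of the subsequential limit with $\PP(X\given Z)$ by integrating against the $Z$-marginal, and a tightness-plus-triangle-inequality passage to the limit in the factorized expectation. The paper packages these steps as $d_{BL}$-estimates on Markov kernels and product measures (Lemmas \ref{thm:disintegration_bound_1}, \ref{thm:disintegration_bound_2} and \ref{thm:disintegration_continuous}), whereas you carry out the same computations at the level of the test functions $f,g,h$; the mathematical content is the same.
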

The proofs of Theorems \ref{thm:lip_mk_closed} and \ref{thm:independence_continuous} are given in Appendix \ref{sec:lemmas}.
The local compactness hypothesis on $\Zcal$ is satisfied for instance by Euclidean spaces $\RR^d$ and discrete spaces; it excludes infinite-dimensional Banach spaces such as $\ell^p$ or $C([0,1], \RR)$. No local compactness is required of $\Xcal$ (or $\Ycal$ in Theorem \ref{thm:independence_continuous}), so $X$ (c.q.\ $Y$) may take values in such function spaces.

\subsection{Sufficient conditions for Markov kernels with modulus of continuity}
In Section \ref{sec:ci_testing} we will use this modulus-of-continuity assumption for conditional independence testing. To make it better accessible, we investigate sufficient conditions for this assumption to hold.

Scheffé's theorem implies that if a conditional density is continuous in its conditioning variable, so $p(x\given z_n) \to p(x\given z)$ for some sequence $z_n \to z$, then $\PP(X\given Z=z_n) \wto \PP(X\given Z=z)$. The following proposition contains a uniform-continuity analogue of this result.

\begin{proposition}\label{thm:lipschitz_mk_properties}
	Markov kernels with modulus of continuity have the following properties:
	\begin{enumerate}[label=\alph*)]
		\item \label{lem:lipschitz_density_implies_lipschitz_MK} If $\PP(X\given Z)$ has a density $p(x\given z)$ with respect to a finite measure $\QQ(X\in \Xcal) \leq M < \infty$ such that $z\mapsto p(x \given z)$ has modulus of continuity $\omega$ uniformly in $x$, then $\PP(X\given Z)$ has modulus of continuity $M\omega$.

		\item \label{thm:discrete_MK_lipschitz} If $\Zcal$ is discrete, then $\PP(X\given Z)$ is $2$-Lipschitz.


		\item \label{thm:lips_probs_implies_lips_dist} If $z \mapsto \PP(X  \in D\given Z=z)$ has modulus of continuity $\omega$ for all $D \in \Bcal(\Xcal)$, then $z \mapsto \PP(X\given Z=z)$ has modulus of continuity $\omega$ as well.

		\item \label{thm:gaussians_are_lipschitz} If $\PP(X, Z) \sim \Ncal(\mu, \Sigma)$ is multivariate Gaussian, then $\PP(X\given Z)$ has Lipschitz constant $\|\Sigma_{XZ}\Sigma_{ZZ}^{-1}\|_{\mathrm{op}}$.\footnote{Here, $\|\cdot\|_{\mathrm{op}}$ denotes the operator norm.}
	\end{enumerate}
\end{proposition}
\begin{proof}
	\textcolor{white}{.}\\
	\vspace{-12pt}
	\begin{enumerate}[label=\alph*)]
		\item For any $f\in\BL(\Xcal; \RR)$ and $z,z' \in\Zcal$ we have $\left|\EE[f(X) \given z] - \EE[f(X) \given z']\right| \leq \int_{\Xcal}|p(x\given z) - p(x\given z')|\diff\QQ(x) \leq \omega(d_\Zcal(z, z'))\QQ(\Xcal) \leq M\omega(d_\Zcal(z, z'))$.

		\item For any $f\in\BL(\Xcal; \RR)$ and $z,z' \in\Zcal$ with $z\neq z'$ we have $\left|\int f(x)\diff(\PP(x\given z) - \PP(x\given z'))\right| \leq 2 = 2 d_\Zcal(z, z')$, since the discrete metric satisfies $d_\Zcal(z, z') = 1$ whenever $z\neq z'$.


		\item This follows directly from the definition of the total variation metric, which upper-bounds the bounded Lipschitz metric.

		\item Given $\PP(X,Z) \sim \Ncal(\mu, \Sigma)$, the conditional $\PP(X \given Z=z)$ is Gaussian with mean $\mu_{X\given Z}(z) := \mu_X + \Sigma_{XZ} \Sigma_{ZZ}^{-1}(z - \mu_Z)$ and covariance matrix $\Sigma_{X|Z} := \Sigma_{XX} - \Sigma_{XZ}\Sigma_{ZZ}^{-1}\Sigma_{ZX}$. Let $U \sim \Ncal(0, \Sigma_{X| Z})$. For any $f\in\BL(\Xcal; \RR)$ we have
		\begin{align*}
			|\EE[f(X) \given Z=z] - \EE[f(X) \given Z=z']| & = |\EE[f(\mu_{X\given Z}(z) + U)] - \EE[f(\mu_{X\given Z}(z') + U)]| \\
			                                               & \leq \EE[|f(\mu_{X\given Z}(z) + U) - f(\mu_{X\given Z}(z') + U)|]   \\
			                                               & \leq \|\mu_{X\given Z}(z) - \mu_{X\given Z}(z')\|                    \\
			                                               & \leq \|\Sigma_{XZ} \Sigma_{ZZ}^{-1}\|_{\mathrm{op}}\|z- z'\|.
		\end{align*}
	\end{enumerate}
	\vspace{-18.5pt}
\end{proof}
One readily verifies that the conditional distributions $\PP_n(X\given Z)$ and $\PP_n(Y\given Z)$ from Example \ref{ex:gauss} are $\sqrt{n}/2$-Lipschitz and that they satisfy $d_{BL}(\PP_n(X\given z), \PP_n(X\given z + 1/\sqrt{n})) > 1/3$ for every $z\in\RR$ and $n\in\NN$ (by taking the test function $f(x) = \sin(x)$), so the family $\{\PP_n(X\given Z)\}_n$ admits no shared modulus of continuity. Hence, the fact that conditional independence is not maintained in the limit hints at the sharpness of the modulus condition in Theorem \ref{thm:independence_continuous}.

\subsection{Weak closedness of conditional independence via total variation}\label{sec:ci_closed:tv}
We can also take a different approach to finding sufficient conditions under which conditional independence is weakly closed, which does not use uniform continuity of the Markov kernels. Namely, by \cite{lauritzen2024total}, conditional independence is closed in total variation. For sets of probability measures for which the total variation topology and weak topology coincide, we then immediately obtain that conditional independence is weakly closed. To this end, we consider classes of measures with well-behaved densities. Let $\mu$ be a locally finite Borel measure on complete separable metric space $\Xcal$, let $\omega$ be a modulus of continuity, let $M>0$, and let $W_{\mu,\omega,M}$ be the class of distributions with a density $p$ with respect to $\mu$ that are $\omega$-equicontinuous and uniformly bounded by $M$, that is, we have $|p(x)-p(y)| < \omega(|x-y|)$ and $p(x) \leq M$ for all $x, y\in \Xcal$. Extending \cite{boos1985converse} to complete separable metric spaces, \cite{boeken2026are} (Lemma 6) show that the total variation topology and weak topology coincide on $W_{\mu,\omega,M}$, and that $W_{\mu,\omega,M}$ is closed in the weak topology. By \cite{lauritzen2024total}, this implies the following result:

\begin{restatable}[]{theorem}{citvweakclosed}
	\label{thm:ci_tv_weak_closed}
	Let $\Xcal, \Ycal, \Zcal$ be complete separable metric spaces, and let $\mu, M, \omega$ be given. If $\PP_n(X, Y, Z)\wto \PP(X, Y, Z)$ with $X\Indep_{\PP_n} Y\given Z$ and $\PP_n \in W_{\mu,\omega,M}$ for all $n\in\NN$, then $\PP \in W_{\mu,\omega,M}$ as well and $X\Indep_{\PP} Y\given Z$.
\end{restatable}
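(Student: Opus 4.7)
The proof plan is a direct two-step chaining of previously established results, so the work here is essentially bookkeeping rather than fresh analysis. The two ingredients are Theorem \ref{thm:tv_weak_equivalence} of this paper (weak convergence implies total variation convergence on the class $W_\mu$) and the result of \cite{lauritzen2024total} that conditional independence is preserved under total variation limits.

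First, I would invoke Theorem \ref{thm:tv_weak_equivalence} on the sequence $\PP_n(X,Y,Z)$: since each $\PP_n \in W_\mu$ and $\PP_n \wto \PP$ by assumption, the theorem upgrades this to $\PP_n \tvto \PP$ in the total variation metric on $\Pcal(\Xcal\times\Ycal\times\Zcal)$. Second, I would apply the closedness result of \cite{lauritzen2024total}: the conditional independence statements $X \Indep_{\PP_n} Y \given Z$ hold for every $n$, and the sequence converges in total variation to $\PP$, so the limit satisfies $X \Indep_\PP Y \given Z$. The conclusion follows.

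There is no real obstacle in the argument itself; the entire substance of the proof lies in the two cited results. The only point worth double-checking is that the total variation closedness result of \cite{lauritzen2024total} applies in the generality we need — namely, for probability measures on products of metric spaces without further regularity assumptions on the limit $\PP$ (which need not itself lie in $W_\mu$). Assuming this — which is the content of that paper's main theorem — the proof is essentially a one-line composition, and I would write it as such without further elaboration.

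\begin{proof}
By Theorem \ref{thm:tv_weak_equivalence}, weak convergence $\PP_n \wto \PP$ with $\PP_n \in W_\mu$ implies $\PP_n \tvto \PP$. Since $X\Indep_{\PP_n} Y \given Z$ for all $n$, the total variation closedness of conditional independence \citep{lauritzen2024total} yields $X\Indep_\PP Y \given Z$.
\end{proof}
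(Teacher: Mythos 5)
Your proposal is correct and matches the paper's own argument exactly: the paper also derives this result as an immediate corollary of Theorem \ref{thm:tv_weak_equivalence} combined with the total variation closedness of conditional independence from \cite{lauritzen2024total}. Nothing further is needed.
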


Another approach would be to consider sufficiently regular exponential families, for which by \cite{barndorff-nielsen2014information}, Section 8.1, Theorem 8.3, the parameter space with the Euclidean topology is homeomorphic to the set of probability measures with the weak topology. By Scheffé's theorem, this topology then coincides with the total variation topology.

The hypotheses of Theorems \ref{thm:independence_continuous} and \ref{thm:ci_tv_weak_closed} are complementary, neither implying the other. Also, Theorem \ref{thm:independence_continuous} cannot be proved by invoking \cite{lauritzen2024total}:
weak convergence plus the modulus-of-continuity assumption does not imply total variation convergence (for example, let $\PP_n(X) = \delta_{1/n}$ and $\PP_n(Y|X=x)=\delta_x$, which is $1$-Lipschitz and $\PP_n(Y|X)\otimes \PP_n(X)$ converges weakly to the point mass on (0,0), but not in total variation).

\subsection{Related literature}\label{sec:ci_closed_related_literature}
As sufficient condition for conditional independence to be closed in the weak topology, \cite{jordan1977continuity} and \cite{hellwig1996sequential} considered probability measures $\PP$ with $z \mapsto \PP(X\given Z=z)$ continuous, but \cite{barbie2014topology} gave a counterexample. They show that conditional independence is closed in the \emph{topology of information} (see also \cite{backhoff2020all}), and show that this topology coincides with the weak topology under similar equicontinuity conditions as Theorem \ref{thm:independence_continuous}.

\section{Sufficient conditions for conditional independence testing}\label{sec:ci_testing}
Having the topological sufficient conditions for testability from Section \ref{sec:characterisations} and the sufficient conditions for conditional independence to be weakly closed from Section \ref{sec:ci_closed} at our disposal, we can relatively easily prove the existence of consistent conditional independence FP-tests, in various settings.

To this end, recall that $W_\omega := \{\PP\in \Pcal(\Xcal\times\Ycal\times\Zcal) : \PP(X\given Z) \text{ has modulus of continuity } \omega\}$. Also, recall the definition of $W_{\mu,\omega,M}$ from Section \ref{sec:ci_closed:tv}.
\begin{theorem}\label{thm:ci_testable}
	Let $\Xcal, \Ycal, \Zcal$ be complete separable metric spaces, let $W\subseteq \Pcal(\Xcal\times\Ycal\times\Zcal)$ and consider the hypotheses $H_0 := \{\PP\in W : X\Indep_{\PP} Y\given Z\}$ and $H_1 := \{\PP\in W : X\nIndep_{\PP} Y\given Z\}$.
	\begin{enumerate}[label=\alph*)]
		\item\label{thm:ci_testable_t1} If $\Zcal$ is locally compact and $W \subseteq W_\omega$ for some modulus of continuity $\omega$, then there exists a strongly consistent FP-test with uniform error control under $H_0$.
		\item\label{thm:ci_testable_t1_discrete} If $\Zcal$ is discrete and $W \subseteq \Pcal(\Xcal\times\Ycal\times\Zcal)$, then there exists a strongly consistent FP-test with uniform error control under $H_0$.
		\item If $W \subseteq W_{\mu,\omega,M}$ for some $(\mu,\omega,M)$, then there exists a strongly consistent FP-test with uniform error control under $H_0$.
		\item\label{thm:ci_testable_t1t2} If $\Zcal$ is locally compact and $W\subseteq W_\omega$ is precompact or satisfies the uniform Glivenko--Cantelli property, and we consider for given $\varepsilon > 0$ the alternative hypothesis
		$$H_1^\varepsilon := \left\{\PP \in W: d_{BL}(\PP(X\given Z) \otimes \PP(Y, Z), \PP(X, Y, Z)) \geq \varepsilon \right\},$$
		then there exists a uniformly consistent BP-test.
		\item\label{thm:ci_testable_consistent} If $\Zcal$ is locally compact and $W\subseteq W_\infty := \bigcup_{n\in\NN} W_{\omega_n}$ for some countable family of moduli $\{\omega_n\}_{n\in\NN}$, then there exists a strongly consistent FP-test.
		\item\label{thm:ci_testable_consistent_tv} If $W\subseteq W_\infty^{TV} := \bigcup_{n\in\NN} W_{\mu_n,\omega_n,n}$ for some countable family $\{(\mu_n, \omega_n)\}_{n\in\NN}$, then there exists a strongly consistent FP-test.
	\end{enumerate}
\end{theorem}
\begin{proof}
	\textcolor{white}{.}\\
	\vspace{-12pt}
	\begin{enumerate}[label=\alph*)]
		\item By Theorem \ref{thm:independence_continuous} we have that $H_0$ is closed in $\Pcal(\Xcal\times\Ycal\times\Zcal)$, hence also in $W_\omega$. The result follows from Theorem \ref{thm:testable_iff_weakly_closed}.
		\item If $\Zcal$ is discrete, then by Proposition \ref{thm:lipschitz_mk_properties}.\ref{thm:discrete_MK_lipschitz} we have that $W_\omega = \Pcal(\Xcal\times\Ycal\times\Zcal)$ for $\omega(t) = 2t$, and $\Zcal$ is locally compact (every singleton is open and compact), so the result follows from part \ref{thm:ci_testable_t1}.
		\item By Theorem \ref{thm:ci_tv_weak_closed}, $H_0$ is closed in $W_{\mu,\omega,M}$, so the result follows from Theorem \ref{thm:testable_iff_weakly_closed}.
		\item From Theorem \ref{thm:independence_continuous} it follows that the map $T : \PP(X, Y, Z) \mapsto \PP(X\given Z)\otimes \PP(Y, Z)$ from $W_\omega$ to $\Pcal(\Xcal\times\Ycal\times\Zcal)$ is continuous. Hence, the map
		\begin{equation*}
			f: W_\omega \mapsto [0,1], \quad\PP(X, Y, Z) \mapsto d_{BL}(\PP(X\given Z) \otimes \PP(Y, Z), \PP(X, Y, Z))
		\end{equation*}
		is continuous as well, so $H_0 \subseteq f^{-1}(\{0\})$ and $H_1^\varepsilon \subseteq f^{-1}([\varepsilon, 1])$ are contained in disjoint closed subsets of $W_\omega$. Since $W_\omega$ is closed in $\Pcal(\Xcal\times\Ycal\times\Zcal)$ by Theorem \ref{thm:lip_mk_closed}, $H_0$ and $H_1^\varepsilon$ are closed in $\Pcal(\Xcal\times\Ycal\times\Zcal)$ as well. If $W$ is precompact or satisfies the uniform Glivenko--Cantelli property, the result follows from Theorem \ref{thm:testable_iff_weakly_separated}.
		\item For each $n\in\NN$, the set $W_{\omega_n}$ is closed in $\Pcal(\Xcal\times\Ycal\times\Zcal)$ by Theorem \ref{thm:lip_mk_closed}, hence also in $W_\infty$. Writing $H_0' = \{\PP \in \Pcal(\Xcal\times\Ycal\times\Zcal) : X\Indep_\PP Y\given Z\}$, we have that $H_0' \cap W_{\omega_n}$ is closed in $W_{\omega_n}$ by Theorem \ref{thm:independence_continuous}, hence also in $W_\infty$. This gives that $\bigcup_{n\in\NN} H_0' \cap W_{\omega_n}$ is $F_\sigma$ in $W_\infty$ and hence $H_0 \subseteq \bigcup_{n\in\NN} H_0' \cap W_{\omega_n}$ is $F_\sigma$ in $W$. Similarly, writing $H_1' = \{\PP \in \Pcal(\Xcal\times\Ycal\times\Zcal) : X\nIndep_\PP Y\given Z\}$ we have that $H_1' \cap W_{\omega_n}$ is open and hence $F_\sigma$ in $W_{\omega_n}$, so by a similar reasoning $H_1$ is $F_\sigma$ in $W$, and the result follows from Theorem \ref{thm:discernible_iff_weakly_fsigma}.
		\item By Theorem \ref{thm:ci_tv_weak_closed}, each set $W_{\mu_n, \omega_n, n}$ is closed in $\Pcal(\Xcal\times\Ycal\times\Zcal)$ and $H_0' \cap W_{\mu_n, \omega_n, n}$ is closed in $W_{\mu_n, \omega_n, n}$, so the result follows from a similar proof as part \ref{thm:ci_testable_consistent}.
	\end{enumerate}
	\vspace{-18.5pt}
\end{proof}

Note that by symmetry, one can also let $\PP(Y\given Z)$ have modulus of continuity $\omega$ in the definition of $W_\omega$ as considered in Theorem \ref{thm:ci_testable}.

\begin{remark}
	Note that the hypotheses $H_0 := W_\omega$ and $H_1 := \Pcal(\Xcal\times\Ycal\times\Zcal)\setminus W_\omega$ are also FP-testable, since $W_\omega$ is closed by Theorem \ref{thm:lip_mk_closed}. Hence, these regularity assumptions for conditional independence testing \emph{are themselves testable}. An assumption-free consistent FP-test with uniform error control under $H_0$ therefore exists for the hypotheses $H_0 := \{\PP\in W_\omega : X\Indep_{\PP} Y\given Z\}$ and $H_1 := \Pcal(\Xcal\times\Ycal\times\Zcal)\setminus H_0$. The same reasoning applies to $W_{\mu,\omega,M}$.
\end{remark}

\subsection{Related literature}\label{sec:ci_related_literature}
\cite{warren2021wasserstein} proposes a conditional independence test based on binning the space $\Zcal$, and proves that it is pointwise asymptotically valid under the null and consistent under the alternative, under the assumption that the distributions $\PP(X, Y\given Z), \PP(X\given Z)$ and $\PP(Y\given Z)$ are $L$-Lipschitz maps from a compact space $\Zcal$ to the space of probability measures equipped with the $p$-Wasserstein distance $W_p$. To compare this assumption to the equicontinuity assumption that we make, we have $d_{BL}(\PP_0, \PP_1) \leq W_p(\PP_0, \PP_1)$, with equality $d_{BL}(\PP_0, \PP_1) = W_1(\PP_0, \PP_1)$ if $\PP_0$ and $\PP_1$ have bounded support (\citealp{bogachev2007measurevol2}, Theorem 8.10.45). Theorem \ref{thm:ci_testable}.\ref{thm:ci_testable_t1} shows that, under the assumptions of \cite{warren2021wasserstein}, there even exists a test that is valid at every sample size $n\in\NN$, and one requires only $\PP(X\given Z)$ or $\PP(Y\given Z)$ to admit a modulus of continuity $\omega$.

\cite{neykov2021minimax} propose a conditional independence test which obtains a minimax-optimal rate, under the assumption that $X, Y, Z$ are compactly supported, have densities, and $\PP(X\given Z)$ and $\PP(Y\given Z)$ are $L$-Lipschitz maps from $\Zcal$ to the space of probability measures equipped with the total variation metric.

\cite{gyorfi2012strongly} propose a conditional independence test and aim at proving its strong consistency without any regularity conditions on $H_0$ and $H_1$, but \cite{neykov2021minimax} point out a mistake in their proof.
\cite{dai2025consistent} propose a consistent test, under the assumption of a smooth parametric model.
For data beyond real-valued random variables, there is some work about conditional independence testing where $X, Y, Z$ take values in function spaces, representing measurements of continuous-time stochastic processes. \cite{lundborg2022conditional} propose an asymptotically valid test under the assumption that $\EE[X \given Z]$ and $\EE[Y\given Z]$ can be estimated sufficiently well, and \cite{manten2024signature} propose a weakly consistent test under the assumption that $\EE[X\given Z], \EE[Y\given Z]$ and $\EE[(X, Y)\given Z]$ can be estimated with certain kernel methods.
None of these tests take any regularity of the critical region into account.

\section{Discussion}
This work establishes a topological framework for understanding the testability of statistical hypotheses under the constraint of finite-precision measurements. Our results reveal that the feasibility of constructing consistent tests with various types of error control can be characterized entirely in terms of the topological properties ($F_\sigma$, closed, clopen, or metric separation) of the null and alternative hypotheses in the weak topology on the space of probability measures.
An important implication is the non-testability of conditional independence hypotheses in general: because both the null and alternative hypotheses are dense and their union is a Baire space, no consistent FP-test exists without additional assumptions. This generalizes and strengthens prior results in the literature, and underscores that conditional independence --- while foundational in causal inference and graphical models --- is not a testable property without regularity conditions.
By imposing a shared modulus-of-continuity assumption on the conditional distributions, we recover testability: we show that this assumption ensures closedness in the weak topology, enabling the construction of tests with uniform error control under $H_0$. This yields sufficient conditions --- including Lipschitz, Hölder, and log-Hölder regularity classes as special cases --- under which conditional independence becomes statistically testable. In particular, we prove that the regularity assumption itself is testable.

Despite these contributions, several important questions remain open.
While the established topological conditions are necessary and sufficient, they might not directly provide one with an explicit test: given an open $H_1$, one must find open $A_{ij} \subseteq \Xcal$ and $q_{ij} \in [0,1]$ such that $H_1 = \bigcup_{i=1}^\infty \bigcap_{j=1}^{m_i}\{\PP : \PP(A_{ij}) > q_{ij}\}$ in order to construct a test, and this representation may be difficult to identify in practice. It would be interesting to find such an explicit representation for conditional independence.
Beyond the existence questions addressed here, the rates of error decay attainable by FP- and BP-tests, and whether our constructions are (minimax) optimal in any sense, remain open.
In the setting of Theorem \ref{thm:testable_iff_weakly_separated}, it would be interesting to study in more practical applications the tradeoff between the level of bounded precision, the separation of the hypotheses, and the rate of uniform convergence.
Another interesting avenue for future research would be to allow for dependent data sampling. By Theorem \ref{thm:testable_equiv_error}, uniform error control under $H_0$ can be interpreted as $\sup_{\PP\in H_0} \PP^\infty(\exists n : \varphi_n\neq 0) \leq \alpha$, which is closely related to anytime-valid p-values, e-values and e-processes \citep{wang2022false,grunwald2024safe}. Since exchangeable sequences of data can be obtained as convex combinations of i.i.d.\ processes \citep{ramdas2022testing}, we conjecture that our results can be extended to anytime-valid tests for exchangeable data. Extensions to topological characterisations of the existence of e-values and e-processes for arbitrary sampling schemes would be of significant interest. It is plausible that analogous topological criteria govern the possibility of valid online or interactive hypothesis testing in complex models.
It remains an open question whether there exists a consistent FP-test for conditional independence if one merely assumes that the distribution has a density.
Finally, from Example \ref{ex:gauss} it seems that some kind of equicontinuity of $\PP(X\given Z)$ or $\PP(Y\given Z)$ is necessary for conditional independence testing with uniform error control under $H_0$. Finding these necessary conditions would be of great interest.

In summary, our results offer a unified topological perspective on the fundamental limits of statistical inference under measurement constraints. They resolve open problems around the testability of nonparametric hypotheses, while opening avenues for further exploration in both theoretical and applied statistics.

\appendix
\section{Proofs of Theorems \ref{thm:lip_mk_closed} and \ref{thm:independence_continuous}}\label{sec:lemmas}

In this appendix, we prove Theorems \ref{thm:lip_mk_closed} and \ref{thm:independence_continuous} from Section \ref{sec:ci_closed}. 

\begin{lemma}\label{thm:disintegration_continuous}
	Let $\Xcal, \Zcal$ be complete separable metric spaces with $\Zcal$ locally compact. If $\PP_n(X, Z) \wto \PP(X, Z)$ and $\PP_n(X\given Z)$ has modulus of continuity $\omega$ for all $n\in\NN$, then $\PP(X\given Z)$ admits a version with modulus of continuity $\omega$ on $\Zcal$, and $\PP_n(X\given Z)\to\PP(X\given Z)$ uniformly on compact subsets of $\Zcal$.
\end{lemma}
\begin{proof}
	The shared modulus of continuity implies that $\{z\mapsto \EE_n[f(X)\given Z=z]\}_{n\in\NN}$ is equicontinuous on $\Zcal$ for every $f\in\BL_1(\Xcal;\RR)$. By \cite{sweeting1989conditional}, Theorem 4, applied with $V=\Zcal$, this equicontinuity together with the joint convergence $\PP_n(X, Z)\wto \PP(X, Z)$ gives $\PP_n(X\given Z=z)\wto \PP(X\given Z=z)$ for every $z\in\Zcal$, where $\PP(X\given Z=z)$ is the unique continuous version of the conditional distribution.\footnote{Although \cite{sweeting1989conditional}, Theorem 4 is stated under the hypothesis that $\Xcal$ is locally compact, the direction (i) $\Rightarrow$ (ii) used here is established via \cite{sweeting1989conditional}, Theorem 3 on the locally compact space $\Zcal$ and does not require local compactness of $\Xcal$.}

	For each $z\in\Zcal$, pointwise weak convergence and Prokhorov's theorem (\citealp{bogachev2007measurevol2}, Theorem 8.6.2) imply that $\{\PP_n(X\given Z=z) : n\in\NN\}$ is relatively compact in $(\Pcal(\Xcal), d_{BL})$. Combined with the equicontinuity, the family $\{z\mapsto \PP_n(X\given z)\}_{n\in\NN}\subseteq C(\Zcal, \Pcal(\Xcal))$ is equicontinuous and pointwise relatively compact, so by Ascoli's theorem (\citealp{munkres2014topology}, Theorem 47.1) it has compact closure in $C(\Zcal, \Pcal(\Xcal))$ equipped with the topology of uniform convergence on compacta. Since $\Zcal$ is locally compact and second countable, the topology of uniform convergence on compact subsets of $\Zcal$ is metrizable (\citealp{munkres2014topology}, Exercise 10 in \S46). Relative compactness in a metrizable space is equivalent to sequential relative compactness, so every subsequence $(n_k)$ has a further subsequence $(n_{k_j})$ converging uniformly on compacta of $\Zcal$ to some continuous limit $g : \Zcal\to\Pcal(\Xcal)$. By the pointwise convergence $\PP_n(X\given Z=z)\wto \PP(X\given Z=z)$ at every $z\in\Zcal$, this limit must satisfy $g(z) = \PP(X\given Z=z)$ for all $z\in\Zcal$. Since every subsequence has a further subsequence converging to the same limit $\PP(X\given Z=\cdot)$, the whole sequence converges to it: $\sup_{z\in K} d_{BL}(\PP_n(X\given z), \PP(X\given z))\to 0$ for every compact $K\subseteq\Zcal$. The limit map $z\mapsto \PP(X\given Z=z)$ inherits modulus of continuity $\omega$ on $\Zcal$ as a uniform-on-compacta limit of $\omega$-equicontinuous maps.
\end{proof}

\lipmkclosed*
\begin{proof}
	Let $\PP_n(X, Z) \wto \PP(X, Z)$ with $\PP_n(X\given Z)$ having modulus of continuity $\omega$ for all $n\in\NN$. By Lemma \ref{thm:disintegration_continuous}, $\PP(X\given Z)$ admits a version with modulus of continuity $\omega$ on $\Zcal$, hence $\PP\in W_\omega$, i.e.\ $W_\omega$ is closed in the weak topology.
\end{proof}

\begin{lemma}\label{thm:disintegration_bound_1}
	Let $\Xcal, \Ycal, \Zcal$ be separable metric spaces. Let $\{\PP_n(X\given Z)\}_{n\in\NN}$ be a family of Markov kernels with shared modulus of continuity $\omega$, and let $\PP_n(Y, Z), \PP(Y, Z)\in\Pcal(\Ycal\times\Zcal)$ with $\PP_n(Y, Z)\wto \PP(Y, Z)$. Then
	\begin{equation*}
		d_{BL}(\PP_n(X\given Z)\otimes \PP_n(Y,Z), \PP_n(X\given Z)\otimes \PP(Y,Z)) \to 0.
	\end{equation*}
\end{lemma}
\begin{proof}
	Considering the product metric $d_{\Xcal\times\Ycal\times\Zcal} = d_\Xcal + d_\Ycal + d_\Zcal$, for any $f\in\BL(\Xcal\times\Ycal\times\Zcal; \RR)$ and $n\in\NN$ define $g_n^f : \Ycal\times\Zcal\to\RR$ by
	\begin{equation*}
		g_n^f(y, z) := \int_\Xcal f(x, y, z) \diff\PP_n(x\given z).
	\end{equation*}
	Then $\|g_n^f\|_\infty \leq 1$, and
	\begin{align*}
		|g_n^f(y, z) - g_n^f(y', z')| & \leq \left|\int (f(x, y, z) - f(x, y', z'))\diff\PP_n(x\given z)\right|                                          \\
		                              & \quad + \left|\int f(x, y', z')\diff(\PP_n(x\given z) - \PP_n(x\given z'))\right|                                \\
		                              & \leq d_\Ycal(y, y') + d_\Zcal(z, z') + d_{BL}(\PP_n(X\given z), \PP_n(X\given z'))                               \\
		                              & \leq d_\Ycal(y, y') + d_\Zcal(z, z') + \omega(d_\Zcal(z, z'))
	\end{align*}
	by the bounded Lipschitz property of $f$ and the shared modulus of continuity of $\{\PP_n(X\given Z)\}$. Hence the family $\mathcal{G} := \{g_n^f : n\in\NN, f\in\BL(\Xcal\times\Ycal\times\Zcal; \RR)\}$ is uniformly bounded by 1 and equicontinuous on $\Ycal\times\Zcal$ with shared modulus of continuity.

	By Fubini's theorem, we have
	\begin{align*}
		&d_{BL}(\PP_n(X\given Z)\otimes \PP_n(Y,Z), \PP_n(X\given Z)\otimes \PP(Y,Z)) \\
		&= \sup_f\int f \diff(\PP_n(X\given Z)\otimes \PP_n(Y, Z) - \PP_n(X\given Z)\otimes \PP(Y, Z)) \\
		&= \sup_f\int g_n^f \diff(\PP_n(Y, Z) - \PP(Y, Z)) \\
		&\leq \sup_{g\in \mathcal{G}}\int g \diff(\PP_n(Y, Z) - \PP(Y, Z)).
	\end{align*}
	Since $\PP_n(Y, Z)\wto \PP(Y, Z)$, this last term converges to 0 by \cite{dudley2002real}, Corollary 11.3.4.
\end{proof}

\begin{lemma}\label{thm:disintegration_bound_2}
	Let $\Xcal, \Ycal, \Zcal$ be separable metric spaces, with $\Zcal$ complete. If $\PP_n(X\given Z) \to \PP(X\given Z)$ uniformly on compacta in $d_{BL}$, then for any given $\PP(Y, Z)$ we have
	\begin{equation*}
		d_{BL}(\PP_n(X\given Z)\otimes \PP(Y,Z), \PP(X\given Z)\otimes \PP(Y,Z)) \to 0.
	\end{equation*}
\end{lemma}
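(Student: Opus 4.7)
The plan is to reduce the BL-distance on the product space to an integral of pointwise BL-distances on $\Xcal$, then to exploit tightness of $\PP(Z)$ and uniform convergence on compacta of the Markov kernel.

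First, fix an arbitrary test function $f \in \BL(\Xcal\times\Ycal\times\Zcal;\RR)$ (using the product metric $d_\Xcal + d_\Ycal + d_\Zcal$). Define the partial integrals
\begin{equation*}
	g_n(y,z) := \int_\Xcal f(x,y,z)\,\diff\PP_n(x\given z), \qquad g(y,z) := \int_\Xcal f(x,y,z)\,\diff\PP(x\given z).
\end{equation*}
For each fixed $(y,z)$, the slice $f(\cdot,y,z)$ lies in $\BL(\Xcal;\RR)$ (since the $\Xcal$-Lipschitz constant and sup-norm are controlled by those of $f$), so by the definition of $d_{BL}$,
\begin{equation*}
	|g_n(y,z) - g(y,z)| \leq d_{BL}\bigl(\PP_n(X\given Z=z),\, \PP(X\given Z=z)\bigr) =: \delta_n(z).
\end{equation*}
Integrating against $\PP(Y,Z)$ and taking the supremum over $f\in\BL$ yields
\begin{equation*}
	d_{BL}\bigl(\PP_n(X\given Z)\otimes \PP(Y,Z),\, \PP(X\given Z)\otimes \PP(Y,Z)\bigr) \leq \int_\Zcal \delta_n(z)\,\diff\PP(z).
\end{equation*}

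The second step is to control the right-hand side using that $\delta_n$ is bounded (by $2$, since $d_{BL}$ of two probability measures is at most $2$ given the normalization $\|f\|_\infty\leq 1$) and converges to $0$ uniformly on compacta by hypothesis. Since $\Zcal$ is a Polish space, the marginal $\PP(Z)$ is tight: for any $\varepsilon > 0$ there exists a compact $K_\varepsilon\subseteq\Zcal$ with $\PP(\Zcal\setminus K_\varepsilon) < \varepsilon$. Splitting the integral gives
\begin{equation*}
	\int_\Zcal \delta_n(z)\,\diff\PP(z) \leq \sup_{z\in K_\varepsilon}\delta_n(z) + 2\varepsilon.
\end{equation*}
By uniform convergence on compacta, $\sup_{z\in K_\varepsilon}\delta_n(z)\to 0$ as $n\to\infty$; letting $\varepsilon\downarrow 0$ afterwards concludes the proof.

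I expect no serious obstacle: the argument mirrors the proof of Lemma \ref{thm:disintegration_bound_1} on the outer integrand, and the only new ingredient is invoking tightness of $\PP(Z)$ to pass from uniform convergence on compacta to convergence in $L^1(\PP(Z))$. One minor point to verify is that $\Pcal(\Xcal)$-valued measurable maps $z\mapsto\delta_n(z)$ are well-defined (which follows since the BL-distance between two kernels is a measurable function of $z$, being a supremum over a countable dense subfamily of $\BL(\Xcal;\RR)$, available by separability of $\Xcal$).
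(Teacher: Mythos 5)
Your proof is correct and follows essentially the same route as the paper's: slice the test function to reduce to the pointwise $d_{BL}$ of the kernels, bound the resulting integral using tightness of $\PP(Z)$ and the uniform bound $d_{BL}\leq 2$ off a compact set, and invoke uniform convergence on compacta. The additional remark on measurability of $z\mapsto\delta_n(z)$ is a sensible detail the paper leaves implicit.
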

\begin{proof}
	For any $f(x,y,z) \in \BL(\Xcal\times \Ycal \times \Zcal; \RR)$ we have that $x\mapsto f(x,y,z) \in \BL(\Xcal; \RR)$, so
	\begin{align*}
		\Bigg|\int f(x,y,z) \diff \PP_n(x\given z) & - \int f(x,y,z) \diff \PP(x\given z)\Bigg| \leq d_{BL}(\PP_n(X\given z), \PP(X\given z)).
	\end{align*}
	Since $\Zcal$ is separable and complete, the measure $\PP(Z)$ is tight, so for every $\varepsilon>0$ there exists a compact $K_\varepsilon \subseteq \Zcal$ such that $\PP(Z\notin K_\varepsilon) \leq \varepsilon$. Since $d_{BL} \leq 2$, we have
	\begin{align*}
		\Bigg|\int f(x,y,z) \diff \PP_n(x\given z) \diff\PP(y,z) - & \int f(x,y,z) \diff \PP(x\given z)\diff\PP(y,z)\Bigg|                                   \\
		                                                           & \leq \int d_{BL}(\PP_n(X\given z), \PP(X\given z))\diff\PP(z)                           \\
		                                                           & \leq \sup_{z\in K_\varepsilon} d_{BL}(\PP_n(X\given z), \PP(X\given z)) + 2\varepsilon.
	\end{align*}
	Since $\PP_n(X\given Z) \to \PP(X\given Z)$ uniformly on compacta in $d_{BL}$ we get the result.
\end{proof}

\independencecontinuous*
\begin{proof}
	By Lemma \ref{thm:disintegration_continuous}, $\PP_n(X\given Z) \to\PP(X\given Z)$ uniformly on compact subsets of $\Zcal$ in $d_{BL}$. By the chain inequality
	\begin{align*}
		d_{BL}(\PP_n(X\given Z)\otimes \PP_n(Y,Z), & ~\PP(X\given Z)\otimes \PP(Y,Z))                                                       \\
		                                           & \leq d_{BL}(\PP_n(X\given Z)\otimes \PP_n(Y,Z), \PP_n(X\given Z)\otimes \PP(Y,Z)) \\
		                                           & + d_{BL}(\PP_n(X\given Z)\otimes \PP(Y,Z), \PP(X\given Z)\otimes \PP(Y,Z)),
	\end{align*}
	the first term converges to zero by Lemma \ref{thm:disintegration_bound_1} (since $\PP_n(Y, Z)\wto \PP(Y, Z)$, which follows from $\PP_n(X, Y, Z)\wto \PP(X, Y, Z)$), and the second term converges to zero by Lemma \ref{thm:disintegration_bound_2}. Hence $\PP_n(X\given Z) \otimes \PP_n(Y, Z) \wto \PP(X\given Z)\otimes \PP(Y, Z)$. Combined with $\PP_n(X, Y, Z) = \PP_n(X\given Z)\otimes \PP_n(Y, Z)$ and $\PP_n(X, Y, Z) \wto \PP(X, Y, Z)$, this gives $\PP(X, Y, Z) = \PP(X\given Z)\otimes \PP(Y, Z)$, i.e.\ $X\Indep_\PP Y\given Z$.
\end{proof}

\section*{Acknowledgements}
Philip Boeken was supported by Booking.com. Eduardo Skapinakis was funded by national funds through the FCT – Fundação para a Ciência e a Tecnologia, I.P., under the scope of the projects UIDB/00297/2020 (\url{https://doi.org/10.54499/UIDB/00297/2020}) and UIDP/00297/2020 (\url{https://doi.org/10.54499/UIDP/00297/2020}) (Center for Mathematics and Applications) and the FCT scholarship, reference 2022.10596.BD (\url{https://doi.org/10.54499/2022.10596.BD}).
The authors thank Claude Code, Bas Kleijn, Sourbh Bhadane and Johannes Ruf for helpful remarks and suggestions.

\bibliographystyle{apalike}
\bibliography{refs}


\end{document}